\crefname{theorem}{Theorem}{Theorems}
\crefname{thm}{Theorem}{Theorems}
\crefname{lemma}{Lemma}{Lemmas}
\crefname{lem}{Lemma}{Lemmas}
\crefname{remark}{Remark}{Remarks}
\crefname{prop}{Proposition}{Propositions}
\crefname{defn}{Definition}{Definitions}
\crefname{claim}{Claim}{Claims}
\crefname{corollary}{Corollary}{Corollaries}
\crefname{conjecture}{Conjecture}{Conjectures}
\crefname{question}{Question}{Questions}
\crefname{chapter}{Chapter}{Chapters}
\crefname{section}{Section}{Sections}
\crefname{figure}{Figure}{Figures}
\theoremstyle{plain}
\newtheorem{thm}{Theorem}[section]
\newtheorem{corollary}[thm]{Corollary}
\newtheorem{prop}[thm]{Proposition}
\theoremstyle{definition}
\theoremstyle{remark}
\newtheorem{remark}[thm]{Remark}
\numberwithin{equation}{section}
\renewcommand{\P}{\mathbb P}
\newcommand{\E}{\mathbb E}
\newcommand{\R}{\mathbb R}
\newcommand{\Z}{\mathbb Z}
\newcommand{\N}{\mathbb N}
\newcommand{\cF}{\mathcal F}
\newcommand{\cG}{\mathcal G}
\newcommand{\eps}{\varepsilon}
\newcommand{\lrDini}[1]{\left(\frac{d}{d #1}\right)_{\hspace{-0.2em}+}\!}
\newcommand{\bP}{\mathbf{P}}
\newcommand{\bE}{\mathbf{E}}
\newcommand{\Cov}{\operatorname{Cov}}
\newcommand{\CoVr}{\operatorname{CoVr}}
\newcommand{\phiw}{\phi^\mathrm{w}}
\newcommand{\phif}{\phi^\mathrm{f}}
\newcommand{\phih}{\phi^\#}
\newcommand{\hash}{\#}
\newcommand{\myfrac}[3][0pt]{\genfrac{}{}{}{}{\raisebox{#1}{$#2$}}{\raisebox{-#1}{$#3$}}}
\title{\bf New critical exponent inequalities for percolation and the random cluster model}
\renewenvironment{abstract}
 {\par\noindent\textbf{\abstractname.}\ \ignorespaces}
 {\par\medskip}
\author{{\bf Tom Hutchcroft}}
\begin{document}

\date{\small{\today}}

\maketitle

\setstretch{1.1}

\begin{abstract}
We apply a variation on the methods of Duminil-Copin, Raoufi, and Tassion [\emph{Ann.\ Math.\ 2018}] to establish a new differential inequality applying to 
both Bernoulli percolation 
and the Fortuin-Kasteleyn random cluster model. This differential inequality has a similar form to that derived for Bernoulli percolation by Menshikov [\emph{Soviet Math.\ Dokl.\ 1986}] but with the important difference that it describes 
the distribution of the 
\emph{volume} of a cluster rather than of its radius. We apply this differential inequality to prove the following: 
\begin{enumerate}
	\item The critical exponent inequalities $\gamma \leq \delta-1$ and $\Delta \leq \gamma +1$ hold for percolation and the random cluster model on any transitive graph. These inequalities are new even in the context of Bernoulli percolation on $\Z^d$, and are saturated in mean-field for Bernoulli percolation and for the random cluster model with $q \in [1,2)$. 
	\item  The volume of a cluster has an exponential tail in the entire subcritical phase of the random cluster model on any transitive graph. This proof also applies to infinite-range models, where the result is new even in the Euclidean setting.
\end{enumerate}
\end{abstract}

% We apply a variation on the methods of Duminil-Copin, Raoufi, and Tassion [\emph{Ann.\ Math.\ 2018}] to establish a new family of differential inequalities applying to a large family of statistical mechanics models including Bernoulli percolation, the ferromagnetic Ising and Potts models, and the Fortuin-Kasteleyn random cluster model. These differential inequalities have a similar form to those derived for Bernoulli percolation by Menshikov [\emph{Soviet Math.\ Dokl.\ 1986}] but with the important difference that they describe 
% the distribution of the
% % the 
% \emph{volume} of a cluster rather than of its radius. We apply these differential inequalities to rigorously prove several inequalities relating various critical exponents of the model, including that $\gamma \leq \delta -1$ and that $\Delta \leq \gamma +1$. These inequalities are new even in the context of Bernoulli percolation, and are saturated in mean-field for Bernoulli percolation and for the random cluster model with $q \in [1,2)$. 

% Our methods also establish that the volume of a cluster has an exponential tail in the subcritical phase of the random cluster model on any transitive graph. This proof also applies to infinite-range models, where the result is new even in the Euclidean setting.

\section{Introduction}
\label{sec:intro}
\emph{Differential inequalities} play a central role in the rigorous study of percolation and other random media. Indeed, one of the most important theorems in the theory of Bernoulli percolation is that the phase transition is \emph{sharp}, meaning (in one precise formulation) that the radius of the cluster of the origin has an exponential tail throughout the entire subcritical phase.  This theorem was first proven in independent works of Menshikov \cite{MR852458} and Aizenman and Barsky \cite{aizenman1987sharpness}. While these two proofs were rather different, they both relied crucially on differential inequalities: In Menshikov's case this differential inequality was
\begin{equation}
\label{eq:Menshikov}
\frac{d}{dp}\log \bP_p(R \geq n) \geq  \frac{1}{p} \left[\frac{n}{\sum_{m=0}^n \bP_p(R \geq m)} - 1 \right]  \qquad \text{ for each } n \geq 1
\end{equation}
where $R$ denotes the radius of the cluster of the origin, while for Aizenman and Barsky the relevant differential inequalities were 
\begin{equation}
\label{eq:AizenmanBarsky}
M \leq h \frac{\partial M}{\partial h} + M^2 + p M \frac{\partial M}{\partial p}  \qquad \text{ and } \qquad \frac{\partial M}{\partial p}  \leq d M \frac{\partial M}{\partial h}, 
\end{equation}
where we write $|K|$ for the volume of the cluster of the origin, write $M=M_{p,h}$ for the \textbf{magnetization} $M=\bE_p[1-e^{-h|K|}]$, and write $d$ for the degree of the graph. 
% Furthermore, Aizenman, Barsky, and Fernandez  used a related differential inequality to prove sharpness of the phase transition for the ferromagnetic Ising model \cite{MR894398}, and 
An alternative, simpler proof of sharpness for percolation, which also relies on differential inequalities, was subsequently found by Duminil-Copin and Tassion \cite{duminil2015new}. 
Aside  from their use to establish  sharpness, the differential inequalities \eqref{eq:Menshikov} and \eqref{eq:AizenmanBarsky} also yield further  quantitative information about percolation at and near criticality. In particular, both inequalities can be used to derive bounds on \emph{critical exponents} associated to percolation; this is discussed further in \cref{subsec:percolation_exponents} and reviewed in detail in \cite{grimmett2010percolation}. Similar methods have also yielded similar results for the Ising model \cite{aizenman1987phase,duminil2015new}.

% The proofs of \cite{aizenman1987sharpness,duminil2015new} apply more generally to percolation on arbitrary transitive graphs (and to long-range models),  whereas the proof of \cite{MR852458} requires the graph to have subexponential growth\footnote{In general it still implies  that the radius has  an exponential tail below criticality, but this is not sufficient to deduce that the expected volume is finite if the graph has exponential growth.}. 

% In  particular, either inequality can be used to derive the mean-field bound $\beta \leq 1$, and \eqref{eq:AizenmanBarsky} can also be used to derive  the further mean-field bound $\delta \geq 2$.

Aside from percolation and the Ising model, the class of models that were rigorously proven to undergo sharp phase transitions was, until recently, very limited. In particular, the derivations of both \eqref{eq:Menshikov} and  \eqref{eq:AizenmanBarsky}  rely heavily on the van den Berg-Kesten (BK) inequality \cite{MR799280}, and  are therefore rather specific to Bernoulli percolation. This situation has now improved drastically following the breakthrough work of Duminil-Copin, Tassion, and Raoufi \cite{duminil2017sharp}, who showed that the theory  of \emph{randomized algorithms} can often be used
 to prove sharpness of the phase transition in models satisfying the FKG lattice condition. They first applied this new methodology to prove that a differential inequality essentially equivalent to that of Menshikov \eqref{eq:Menshikov} holds for the Fortuin-Kasteleyn random-cluster model (with $q\geq 1$), from which they deduced sharpness of the phase transition for this model and the ferromagnetic Potts model. Variations on their methods have subsequently been used to prove sharpness results for several other models, including Voronoi percolation \cite{duminil2017exponential}, Poisson-Boolean percolation \cite{duminil2018subcritical}, the Widom-Rowlinson model \cite{dereudre2018sharp}, level sets of smooth planar Gaussian fields \cite{muirhead2018sharp}, and the contact process \cite{beekenkamp2018sharpness}.

 % but that hold for a much larger class of models. 

The main new technical tool introduced by \cite{duminil2017sharp} was a generalization of the \emph{OSSS inequality} from product measures to \emph{monotonic measures}. This inequality, introduced by O'Donnel, Saks, Schramm, and Servedio \cite{o2005every}, can be used to derive differential inequalities for percolation in the following way: 
Let $A$ be an event depending on at most finitely many edges, and suppose that we have an algorithm for computing whether or not $A$ occurs. This algorithm  decides sequentially which edges to reveal the status of, with decisions depending on what it has previously seen and possibly also some external randomness, stopping when it has determined whether or not $A$ has occurred. 
For each edge $e$, let $\delta_e$ be the \textbf{revealment} of $e$, defined to be the probability that the status of the edge $e$ is ever queried by the algorithm. Then the OSSS inequality implies that
\begin{equation}
\label{eq:OSSSintro1}
\frac{d}{dp} \log \bP_p(A) \geq \frac{1-\bP_p(A)}{p(1-p) \max_{e\in E} \delta_e}.
\end{equation}
In particular, if $\bP_p(A)$ is not too large and there exists a randomized algorithm determining whether or not $A$ holds with low maximum revealment, then the logarithmic derivative of $\bP_p(A)$ is large. This yields an extremely flexible methodology for deriving differential inequalities for percolation. Even greater flexibility is provided by the \emph{two-function} version of the OSSS inequality, which implies in particular that if $A$ and $B$ are increasing events and we have some randomized algorithm that determines whether or not $B$ occurs, then
\begin{equation}
\label{eq:OSSSintro2}
\frac{d}{dp} \log \bP_p(A) \geq \frac{\bP_p(B \mid A)-\bP_p(B)}{p(1-p) \max_{e\in E} \delta_e}.
\end{equation}

 % intuitively states that if for an event $A$, if there is a randomized algorithm that determines whether or not $A$ occurs and which has a small probability of ever needing to query the status of any particular edge, then the logarithmic derivative of the probability of $A$ is large.

%  from product measures to arbitrary \emph{monotonic measures}, and used this inequality to derive differential inequalities very similar to that of Menshikov \eqref{eq:Menshikov} but which hold for a large family of models including the Fortuin-Kasteleyn random cluster model (with cluster weight $q\geq 1$). In light of the connection between the random cluster model and the  Potts model, this implies that the phase transition of the Potts model is also sharp. 
% Other models that have now been shown to undergo sharp phase transitions via this method include Voronoi percolation \cite{duminil2017exponential}, Poisson-Boolean percolation \cite{duminil2018subcritical}, the Widom-Rowlinson model \cite{dereudre2018sharp}, and the contact process \cite{beekenkamp2018sharpness}.

\medskip

\textbf{The new differential inequality.}
In this article, we apply the OSSS inequality to establish a new differential inequality for percolation and the random cluster model. Once we establish this inequality, we use it to prove several other new results for these models which are detailed in the following subsections. Our new  inequality is similar to Menshikov's inequality \eqref{eq:Menshikov} but describes the distribution of the \emph{volume} of a cluster rather than of its radius. In the case of percolation on a transitive graph, we obtain in particular that
% \begin{equation}
% \label{eq:percdiffineq}
%  \lrDini{p} \log \bE_p\left[\Psi_{n,\lambda}(|K|)\right]
% \geq \frac{1-2\lambda e^{-\lambda} - \bE_p\!\left[\Psi_{n,\lambda}(|K|)\right]}{2p(1-p)
%  M_{p,\lambda/n}}
% \end{equation}
\begin{equation}
\label{eq:percdiffineq}
 \frac{d}{dp} \log \bP_p\left(|K| \geq n \right)
\geq \frac{1}{2p(1-p)} \left[ \frac{(1- e^{-\lambda})n}{\lambda \sum_{m=1}^{\lceil n/\lambda \rceil }\bP_p(|K|\geq m)}-1\right]
\end{equation}
for each $n\geq 0$, $\lambda > 0$, and $0\leq p < 1$, where $K$ is the cluster of some vertex $v$ and $|K|$ is the number of vertices it contains.  We will typically apply this inequality with $\lambda =1$, but the freedom to change $\lambda$ is sometimes useful for optimizing constants.
 % This inequality bears a very close resemblance to Menshikov's differential inequality \eqref{eq:Menshikov}, but with the important difference that it concerns the distribution of the volume of a cluster rather than of its radius.  
%  the functions $\Psi_{n,\lambda}: [0,\infty]\to [0,\infty)$ are defined by 
% % $\Phi_0 \equiv 1$, $\Psi_0 \equiv 0$ and
% \[
% \Psi_{n,\lambda}(x) = \begin{cases}\mathbbm{1}(|K|\geq n)(1-e^{-\lambda x /n}) & x < \infty\\
% 1 &  x= \infty \end{cases} 
% % \qquad \text{ and } \qquad \Psi_n(x) =  \begin{cases}  \frac{x}{n}e^{-x/n}(1-e^{-x/n})^{n-1} & x < \infty\\
% % 0 &  x= \infty \end{cases} 
% \]
% for $n\geq 1$. Note in particular that $\bE_p\left[\Psi_{n,\lambda}(|K|)\right]$ is of the same order as $\bP_p(|K|\geq n)$ for every $\lambda \geq 1$, and since $M_{p,\lambda,n}$ is of the same order as $\frac{\lambda}{n} \sum_{i=0}^{\lceil n/\lambda \rceil} \bP_p(|K|\geq i)$, the differential inequality \eqref{eq:percdiffineq} has a very similar form to Menshikov's differential inequality \eqref{eq:Menshikov}.

\medskip

We derive \eqref{eq:percdiffineq} by introducing a \textbf{ghost field}  as in \cite{aizenman1987sharpness}, i.e., an independent Bernoulli process $\cG$ on the vertices of $G$ such that $\cG(v)=1$ with probability $1-e^{-\lambda/n}$ for each vertex $v$ of $G$. We call vertices with $\cG(v)=1$ \textbf{green}. We then apply the two-function OSSS inequality where $A$ is the event that $|K|\geq n$, $B$ is the event that $K$ includes a green vertex, and our algorithm simply examines the ghost field at every site and then explores the cluster of each green vertex it discovers. 
% The expectation $\bE_p \Psi_{n,\lambda}(|K|)$ is exactly the probability that the cluster of the origin has size at least $n$ and touches a green vertex. The randomized algorithm we use to determine whether this event holds is simply to first check the value of the ghost field at every vertex, and then explore the cluster of each green vertex. The probability that this algorithm ever reveals the status of a given edge in $\omega_p$ is bounded by twice the magnetization, and this leads us to straightforwardly derive \eqref{eq:percdiffineq} from the OSSS inequality.

\medskip

In the remainder of the introduction we describe consequences of the differential inequality \eqref{eq:percdiffineq} and of its generalization to the random cluster model.

\subsection{Critical exponent inequalities for percolation}
\label{subsec:percolation_exponents}

In this section we discuss the applications of our differential inequality \eqref{eq:percdiffineq} to rigorously establish inequalities between critical exponents in percolation. 
We first recall the definition of Bernoulli bond percolation, referring the reader to e.g.\ \cite{grimmett2010percolation} for further background. Let $G=(V,E)$ be a connected, locally finite, transitive graph, such as the hypercubic lattice $\Z^d$. Here, \textbf{locally finite} means that every vertex has finite degree, and \textbf{transitive} means that for any two vertices $x$ and $y$ of $G$, there is an automorphism of $G$ mapping $x$ to $y$. In \textbf{Bernoulli bond percolation}, each edge of $G$ is either deleted (closed) or retained (open) independently at random with retention probability $p\in [0,1]$ to obtain a random subgraph $\omega_p$ of $G$. The connected components of $\omega_p$ are referred to as \textbf{clusters}. We write $\bP_p$  and $\bE_p$ for probabilities and expectations taken with respect to the law of $\omega_p$. 
% \cite{MR852458}

It is expected that the behaviour of various quantities describing percolation at and near the critical parameter 
\[
p_c = \inf\bigl\{ p \in [0,1] : \omega_p \text{ has an infinite cluster a.s.}\bigr\}
\] are described by \emph{critical exponents}. For example, it is predicted that for each $d\geq 2$ there exist exponents $\beta,\gamma,\delta,$ and $\Delta$ such that percolation on $\Z^d$ satisfies
\begin{align*}
\bP_p(|K|=\infty) &\approx (p-p_c)^\beta & \text{ as } p &\downarrow p_c\\
\bE_p\left[|K|\right] &\approx (p_c-p)^{-\gamma} & \text{ as } p &\uparrow p_c\\
\bP_{p_c}\left(|K| \geq n\right) &\approx n^{-1/\delta} & \text{ as } n &\uparrow \infty\\
\bE_p\left[|K|^k\right] &\approx (p_c-p)^{-(k-1)\Delta+\gamma} & \text{ as } p &\uparrow p_c,
\end{align*}
where $K$ is the cluster of the origin and $\approx$ means that the ratio of the logarithms of the two sides tends to $1$ in the appropriate limit.
 Proving the existence of and computing these critical exponents is considered to be a central problem in mathematical physics. While important progress has been made in two dimensions \cite{MR879034,smirnov2001critical,smirnov2001critical2,lawler2002one}, in high dimensions \cite{MR1043524,MR762034,MR1127713,MR923855,fitzner2015nearest}, and in various classes of infinite-dimensional graphs \cite{MR1833805,MR1888869,1804.10191,Hutchcroftnonunimodularperc}, the entire picture remains completely open in dimensions $3 \leq d \leq 6$.

A further central prediction of the non-rigorous theory is that, if they exist, these exponents should satisfy  the \textbf{scaling relations}
\begin{equation}
\label{eq:scalingrelations}
\gamma = \beta(\delta-1) \qquad \text{ and } \qquad \beta \delta = \Delta
\end{equation}
in every dimension. (There are also two further scaling relations involving the exponents $\alpha$, $\nu$, and $\eta$, which we have not introduced.) See \cite{grimmett2010percolation} for a heuristic derivation of these exponents for mathematicians and e.g.\ \cite{cardy1996scaling} for more physical derivations.
The heuristic derivations of \eqref{eq:scalingrelations} do not rely on any special features of percolation, and the scaling relations \eqref{eq:scalingrelations} are expected to hold for any natural model of random media undergoing a continuous phase transition.

A rigorous proof of \eqref{eq:scalingrelations} remains elusive. Special cases in which progress has been made include the two-dimensional case, where the scaling relations \eqref{eq:scalingrelations} were proven by Kesten \cite{MR879034},  
and the high-dimensional case,  where it has been proven rigorously \cite{MR762034,MR1043524,MR923855,fitzner2015nearest} that the exponents take their  \emph{mean-field} values of $\beta=1,\gamma=1, \delta=2$, and $\Delta=2$, from which it follows that \eqref{eq:scalingrelations} holds. (See e.g.\ \cite{fitzner2015nearest,MR2239599} for a detailed overview of what is known in high-dimensional percolation.) See also \cite{MR3940769} for related results on two-dimensional Voronoi percolation. 
Aside from this, progress on the rigorous understanding of \eqref{eq:scalingrelations} has been limited to proving \emph{inequalities} between critical exponents. In particular, it is known that
\begin{equation}
\label{eq:oldinequalities}
1 \leq \beta (\delta -1), \quad \beta \delta \geq 2, \quad  \frac{\gamma\delta }{\delta -1} \geq 2, \quad  \frac{\gamma  \delta }{\delta -1} \leq \Delta, \quad \text{ and } \quad 2 \gamma \geq \Delta
\end{equation}
whenever these exponents are well-defined: The first of these inequalities is due to Aizenman and Barsky \cite{aizenman1987sharpness}, the second, third, and fourth are due to Newman \cite{MR869320,MR912497,MR894551}, and the fifth is due to Aizenman and Newman \cite{MR762034}. All of these inequalities are saturated when the exponents take their mean-field values, and the fourth is expected to be an equality in every dimension. 
These inequalities are complemented by the \emph{mean-field bounds}
\begin{equation}
\label{eq:meanfieldbounds}
\beta \leq 1, \qquad \gamma \geq 1, \qquad \delta \geq 2, \qquad \text{and} \qquad  \Delta \geq 2
\end{equation}
which were first proven  to hold by Chayes and Chayes \cite{MR894542}, Aizenman and Newman \cite{MR762034}, Aizenman and Barsky \cite{aizenman1987sharpness}, and Durrett and Nguyen \cite{durrett1985thermodynamic} respectively. 
 See \cite[Chapters 9 and 10]{grimmett2010percolation} for further details, and  \cite{MR852458,MR912497,MR869320,duminil2015new,duminil2017sharp} for alternative proofs of some of these inequalities.

Our first application of the differential inequality \eqref{eq:percdiffineq} is to rigorously prove two new critical exponent inequalities, namely that
\begin{equation}
\label{eq:newinequalities}
\gamma \leq \delta -1 \qquad \text{ and } \qquad \Delta \leq \gamma + 1.
\end{equation}
Note that the inequalities of \eqref{eq:newinequalities} are consistent with the conjectural scaling relations \eqref{eq:scalingrelations} due to the mean-field bound $\beta \leq 1$, and are saturated when the relevant exponents take their mean-field values. The first of these inequalities is particularly interesting as it points in a different direction to the previously known inequalities given in \eqref{eq:oldinequalities}. 

We will deduce \eqref{eq:newinequalities} as a corollary of the following two theorems, which are derived from \eqref{eq:percdiffineq} and which give more precise quantitative versions of these critical exponent inequalities. 
The first of these theorems relates the distribution of the volume of a critical cluster to the distribution of the volume of a subcritical cluster. It implies the critical exponent inequalities $\gamma \leq \delta -1$ and $\Delta \leq \delta$. Recall that we write $K$ for the cluster of some arbitrarily chosen vertex.
 
\begin{thm}
\label{thm:gammadelta_percolation} Let $G$ be an infinite, connected, locally finite transitive graph, and suppose that there exist constants $C>0$ and $\delta >1$ such that
\[
\bP_{p_c}\left(|K| \geq n\right) \leq C n^{-1/\delta}
\]
for every $n\geq 1$. Then the following hold:
\begin{enumerate}
	\item
There exist positive constants $c$ and $C'$ such that
\[
\bP_p\left(|K| \geq n\right) \leq  C'n^{-1/\delta} \exp\left[ -c(p_c-p)^{\delta}n \right] 
\]
for every $0 \leq p < p_c$ and $n\geq 1$.
	\item There exists a constant $C''$ such that
	\[
\bE_p\left[|K|^k\right] \leq k!\left[\frac{C''}{p_c-p}\right]^{(\delta-1) + (k-1)\delta } 
\]
for every  and $0\leq p < p_c$ and $k\geq 1$.
\end{enumerate}
\end{thm}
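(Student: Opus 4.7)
The plan is to apply \eqref{eq:percdiffineq} in two passes, using each pass to refine the input to the next. Write $f(p,n)=\bP_p(|K|\geq n)$ and $\chi_N(p)=\sum_{m=1}^N f(p,m)$. Since $\{|K|\geq n\}$ is an increasing event, the hypothesis extends by monotonicity in $p$ to $f(p,n)\leq Cn^{-1/\delta}$ uniformly for $p\leq p_c$. I take $\lambda=1$ in \eqref{eq:percdiffineq} throughout.

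First pass: the uniform critical bound gives $\chi_n(p)\leq C\sum_{m=1}^n m^{-1/\delta}\leq C_2 n^{1-1/\delta}$, which plugged into \eqref{eq:percdiffineq} yields $\tfrac{d}{dp}\log f(p,n)\geq c_1 n^{1/\delta}-O(1)$ for $n$ large. Since $1/(2q(1-q))\geq 2$ on $[0,1]$, integrating from $p$ to $p_c$ produces the preliminary sub-exponential bound
\[f(p,n)\leq Cn^{-1/\delta}\exp\bigl(-c_1(p_c-p)n^{1/\delta}\bigr).\]

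Second pass: summing over $m$ and substituting $u=m^{1/\delta}$ turns the preliminary bound into a Gamma integral,
\[\chi_\infty(p)=\bE_p|K|\leq \delta C\!\int_0^\infty u^{\delta-2}e^{-c_1(p_c-p)u}\,du+O(1)=\frac{C_3}{(p_c-p)^{\delta-1}},\]
which bounds $\chi_n(p)$ uniformly in $n$. Feeding this back into \eqref{eq:percdiffineq} gives $\tfrac{d}{dq}\log f(q,n)\geq c_2 n(p_c-q)^{\delta-1}-O(1)$, and since $\int_p^{p_c}(p_c-q)^{\delta-1}\,dq=(p_c-p)^\delta/\delta$, a second integration leads to
\[f(p,n)\leq C'n^{-1/\delta}\exp\bigl(-c(p_c-p)^\delta n\bigr),\]
proving Part~1 (the regime $(p_c-p)^\delta n=O(1)$ is trivial upon enlarging $C'$).

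Part 2 follows from Part 1 by the layer-cake identity: for each integer $k\geq 1$,
\[\bE_p|K|^k=k\!\int_0^\infty x^{k-1}\bP_p(|K|>x)\,dx\leq kC'\!\int_0^\infty x^{k-1-1/\delta}e^{-c(p_c-p)^\delta x}\,dx=\frac{kC'\,\Gamma(k-1/\delta)}{[c(p_c-p)^\delta]^{k-1/\delta}},\]
and since $k\Gamma(k-1/\delta)\leq k!$ up to a universal constant, rearranging powers of $c(p_c-p)^\delta$ yields $k!(C''/(p_c-p))^{k\delta-1}$ after absorbing constants into $C''$. The main obstacle is the necessity of the bootstrap: a single pass through \eqref{eq:percdiffineq} only yields the weaker tail $\exp(-c(p_c-p)n^{1/\delta})$, and the sharper exponent $-c(p_c-p)^\delta n$ arises only after the first-pass estimate is recycled to replace the polynomial $\chi_n$-bound by the mean-field-scaling bound $\chi_n(p)\leq C_3/(p_c-p)^{\delta-1}$. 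A secondary nuisance is the $-1$ defect in \eqref{eq:percdiffineq}, which contributes uniformly bounded additive losses after integration that are absorbed into the prefactor.
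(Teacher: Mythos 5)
Your proposal is correct and follows essentially the same two-stage bootstrap as the paper's proof of \cref{thm:gammadelta_FK} (which subsumes the percolation case): first plug the critical tail bound into \eqref{eq:percdiffineq} and integrate to get the preliminary $\exp(-c(p_c-p)n^{1/\delta})$ tail, then sum to obtain $\bE_p|K|\preceq(p_c-p)^{-(\delta-1)}$, feed that back into the differential inequality, and conclude Part~2 via the layer-cake formula. The only cosmetic difference is that the paper evaluates the integrated inequality \eqref{eq:integrated2} at the fixed midpoint $\beta_1=(\beta_0+\beta)/2$, whereas you integrate $(p_c-q)^{\delta-1}$ directly over the moving parameter; both yield the same exponent up to constants.
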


% Note that the second item of this theorem is a simple analytic consequence of the first.

% In particular, \cref{thm:gammadelta_percolation} implies that the inequalities
% \[ \gamma \leq \delta -1, \qquad \mu\leq \delta, \quad \text{ and } \quad \Delta \leq \delta \]
% hold whenever the relevant exponents are well-defined.
% For $\Z^d$ with $d$ large we have that $\gamma =1, \delta=2$ and $\Delta = 2$, so that these inequalities are sharp.

The next theorem bounds the growth of the $k$th moment of the cluster volume as $p \uparrow p_c$ in terms of the growth of the first moment as $p\uparrow p_c$. It implies the critical exponent inequality $\Delta \leq \gamma + 1$.

\begin{thm}
\label{thm:gammagap_percolation}
Let $G$ be an infinite, connected, locally finite transitive graph, and suppose that there exist constants $C>0$ and $\gamma \geq 1$ such that
\[
\bP_p\left[|K|\right] \leq C (p_c-p)^{-\gamma}
\]
for every $n\geq 1$ and $0\leq p <p_c$. Then there exists a constant $C'$ such that
\[
\bE_p\left[|K|^k\right] \leq k! \left[\frac{C'}{p_c-p}\right]^{\gamma+(k-1)(\gamma+1)} 
\]
for every $0 \leq p <  p_c$ and $k\geq 1$.
\end{thm}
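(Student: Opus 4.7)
The plan is to extract the moment bound from a sharp tail bound derived using \eqref{eq:percdiffineq}. The two main steps are:

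\textbf{Step 1 (Tail bound with polynomial prefactor).} I claim that the hypothesis implies
\[
\bP_p(|K|\geq n) \leq \frac{C_1}{n}(p_c-p)^{-\gamma} \exp\!\bigl(-c_2(p_c-p)^{\gamma+1}n\bigr)
\]
for all $n\geq 1$ and $p<p_c$. To prove this, apply \eqref{eq:percdiffineq} with $\lambda=1$ to $f(s) := \bP_s(|K|\geq n)$, using $\sum_{m=1}^n \bP_s(|K|\geq m)\leq \chi(s)$ (where $\chi(s):=\bE_s|K|$) and $(2s(1-s))^{-1}\geq 2$, to get
\[
(\log f)'(s) \geq 2\!\left[\frac{(1-e^{-1})n}{\chi(s)} - 1\right].
\]
Setting $p' := (p+p_c)/2$, the hypothesis and monotonicity of $\chi$ give $\chi(s) \leq \chi(p') \leq 2^\gamma C(p_c-p)^{-\gamma}$ on $[p,p']$. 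Integrating from $p$ to $p'$ produces a factor of the form $\exp\!\bigl(c (p_c-p)^{\gamma+1}n - c'(p_c-p)\bigr)$, and then applying Markov's inequality $f(p')\leq \chi(p')/n$ at the endpoint yields the displayed tail bound. (For the small range of $n$ where the integrated differential inequality bracket is negative, the bound is verified directly from Markov alone.)

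\textbf{Step 2 (Moment bound).} Writing $\bE_p[|K|^k] \leq k\sum_{n\geq 1} n^{k-1}\bP_p(|K|\geq n)$ and plugging in Step~1 with $\alpha := c_2(p_c-p)^{\gamma+1}$, I obtain, for $k\geq 2$,
\[
\bE_p[|K|^k] \leq kC_1(p_c-p)^{-\gamma}\sum_{n\geq 1} n^{k-2}e^{-\alpha n} \leq \frac{k!\, C_1 (p_c-p)^{-\gamma}}{\alpha^{k-1}},
\]
using $\sum_{n\geq 1} n^{k-2}e^{-\alpha n}\leq \Gamma(k-1)/\alpha^{k-1}=(k-2)!/\alpha^{k-1}$ and $k(k-2)!\leq k!$. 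Substituting $\alpha^{-(k-1)} = c_2^{-(k-1)}(p_c-p)^{-(k-1)(\gamma+1)}$ gives $k!\cdot\text{const}\cdot(p_c-p)^{-\gamma-(k-1)(\gamma+1)}$, and choosing $C'$ large enough to absorb all constants yields the claim. The case $k=1$ is immediate from the hypothesis.

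\textbf{Main obstacle.} The only real subtlety is producing the $(p_c-p)^{-\gamma}/n$ prefactor in the tail bound in Step~1. If one instead uses the trivial bound $f(p')\leq 1$ at the endpoint, one obtains a pure exponential tail $\bP_p(|K|\geq n)\leq C e^{-\alpha n}$, which in Step~2 produces only $\bE_p[|K|^k] \lesssim k!/\alpha^k \sim k!(p_c-p)^{-k(\gamma+1)}$—off by a factor of $(p_c-p)^{-1}$ from the target exponent $\gamma+(k-1)(\gamma+1) = k(\gamma+1)-1$. The Markov bound at $p'$ effectively trades one factor of $\alpha^{-1}\sim (p_c-p)^{-(\gamma+1)}$ for one factor of $\chi(p')\sim (p_c-p)^{-\gamma}$, saving exactly the needed $(p_c-p)$ and producing the sharp gap inequality $\Delta\leq\gamma+1$.
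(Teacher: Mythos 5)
Your proof is correct and follows essentially the same route as the paper: integrate the differential inequality on $[p,p']$ with $p'=(p+p_c)/2$, apply Markov's inequality at $p'$ to produce the crucial $1/n$ prefactor in the tail bound, and then sum to obtain the $k$th moment bound. The only cosmetic difference is that you apply \eqref{eq:percdiffineq} with $\lambda=1$ and bound $\sum_{m\leq n}\bP_s(|K|\geq m)\leq\chi(s)$, whereas the paper passes through the $\lambda\downarrow 0$ form (\cref{cor:FKdiffineq2}) and its integrated version \eqref{eq:integrated2}.
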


In light of the results of \cite{hutchcroft2018locality}, \cref{thm:gammadelta_percolation} also has the following consequence for  percolation on unimodular transitive  graphs of exponential growth. Here, the \textbf{growth} of a transitive graph $G$ is defined to be $\operatorname{gr}(G) = \lim_{n\to\infty}|B(v,n)|^{1/n}$ where $v$ is a vertex of $G$ and $|B(v,n)|$ is the ball of radius $n$ around $v$. See \cite{hutchcroft2018locality,1804.10191} for more on what is known concerning percolation on such transitive graphs.

\begin{corollary}
\label{corollary:polybound}
For every $g>1$ and $M<\infty$ there exist constants $C=C(g,M)$ and $A=A(g,M)$ such that for every unimodular transitive  graph $G$ with degree at most $M$ and $\operatorname{gr}(G)\geq g$, the bound
\begin{equation}
\label{eq:polybound}
\bE_{p} \left[|K|^k\right] \leq C (p_c-p)^{-A k}
\end{equation}
holds for every $0 \leq p < p_c$ and $k\geq 1$.
\end{corollary}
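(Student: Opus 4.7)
The plan is to combine Theorem~\ref{thm:gammadelta_percolation}(2) with a uniform control on the critical tail decay across the graph class, which is the role of \cite{hutchcroft2018locality}. Concretely, the first step is to extract from \cite{hutchcroft2018locality} constants $C_0 = C_0(g,M)$ and $\delta_0 = \delta_0(g,M)$ such that every unimodular transitive graph $G$ with $\deg(G) \leq M$ and $\operatorname{gr}(G) \geq g$ satisfies
\[
\bP_{p_c}\!\bigl(|K| \geq n\bigr) \leq C_0\, n^{-1/\delta_0} \qquad \text{for every } n \geq 1.
\]
Identifying this uniform polynomial tail bound at criticality (or an equivalent consequence of a uniform triangle condition for such graphs, which would give the mean-field value $\delta_0 = 2$) is the main external input to the argument.

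Once this bound is in hand, the second step is to apply Theorem~\ref{thm:gammadelta_percolation}(2) graph by graph with the uniform $C_0$ and $\delta_0$. This yields, uniformly over the class, the $k$-th moment bound
\[
\bE_p\!\bigl[|K|^k\bigr] \leq k!\left[\frac{C''}{p_c-p}\right]^{(\delta_0-1)+(k-1)\delta_0} = k!\left[\frac{C''}{p_c-p}\right]^{k\delta_0-1},
\]
valid for all $0 \leq p < p_c$ and $k \geq 1$, with $C'' = C''(g,M)$.

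The final step is a rearrangement into the claimed form $C(p_c-p)^{-Ak}$: using $p_c - p \leq 1$ weakens the exponent from $k\delta_0-1$ to $k\delta_0$, and choosing $A = A(g,M)$ strictly larger than $\delta_0$ leaves enough slack in the factor $(p_c-p)^{-(A-\delta_0)k}$ to absorb the remaining prefactor. The only non-routine part of the whole argument is the first step---locating (or, if necessary, distilling) the uniform criticality estimate in \cite{hutchcroft2018locality}---since the moment bound and its rearrangement are otherwise a direct application of the differential-inequality machinery already developed.
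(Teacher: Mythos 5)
Your first two steps are exactly the paper's (implicit) argument: the paper gives no separate proof of \cref{corollary:polybound}, deriving it ``in light of the results of \cite{hutchcroft2018locality}'', i.e.\ from a uniform critical volume-tail estimate $\bP_{p_c}(|K|\geq n)\leq C_0 n^{-1/\delta_0}$ with $C_0,\delta_0$ depending only on $g$ and $M$ (this, and not a triangle-condition/mean-field statement, is the input supplied by \cite{hutchcroft2018locality}), fed into \cref{thm:gammadelta_percolation}(2). So the route is the same as the paper's.

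The gap is in your final absorption step. The prefactor left over is $k!\,(C'')^{k\delta_0-1}$, while the slack you create is $(p_c-p)^{-(A-\delta_0)k}$, which for fixed $p$ is only exponential in $k$, with base $(p_c-p)^{-(A-\delta_0)}$ that is moreover not bounded away from $1$ without an additional uniform bound $p_c\leq 1-\eps(g,M)$ (take $p=0$ on a graph with $p_c$ close to $1$). An exponential in $k$ can never swallow $k!$. Indeed, \eqref{eq:polybound} cannot hold literally with $C,A$ independent of $k$: for any fixed $0<p<p_c$ one has $\bE_p[|K|^k]\geq t^k\,\bP_p(|K|\geq t)\geq t^k p^{t}$ for every $t\geq 1$, and optimizing $t\approx k/\log(1/p)$ shows that $\bE_p[|K|^k]$ grows super-exponentially in $k$ (like $k!$ times an exponential, as it must for a variable with an exponential tail), whereas $C(p_c-p)^{-Ak}$ is only exponential. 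So the corollary should be read with the factorial retained, exactly as it comes out of \cref{thm:gammadelta_percolation}(2): assuming $C''\geq 1$ and using $p_c-p\leq 1$, your steps 1--2 already give $\bE_p[|K|^k]\leq k!\,[C''/(p_c-p)]^{\delta_0 k}$, i.e.\ the bound with $A=\delta_0$ and constant of the form $k!\,C^k$, which is what the deduction from \cref{thm:gammadelta_percolation} and \cite{hutchcroft2018locality} actually yields. The only incorrect part of your write-up is the claim that the remaining $k!$ (and, absent a uniform upper bound on $p_c$ strictly below $1$, even the factor $(C'')^{\delta_0 k}$) can be hidden inside $(p_c-p)^{-(A-\delta_0)k}$ by enlarging $A$.
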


\subsection{The random cluster model}
\label{subsec:introFK}

In this section we discuss generalizations to and applications of \eqref{eq:percdiffineq} to the random cluster model (a.k.a.\ FK-percolation). Since its introduction by Fortuin and Kesteleyn \cite{MR0359655}, the random cluster model has become recognized as the archetypal example of a dependent percolation model, and is closely connected to the Ising and Potts models. We refer the reader to \cite{GrimFKbook} for further background on the model.  We expect that the results in this section will also generalize to other models for which sharpness has been proven via the methods of \cite{duminil2017sharp}.

We begin by defining the random cluster model, which we do at the natural generality of \emph{weighted graphs}. We will take a slightly unconventional approach to allow for a unified treatment of short- and long-range models.
In this paper, a \textbf{weighted graph} $G=(G,J)$ is defined to be a countable graph $G=(V,E)$ together with an assignment of positive \textbf{coupling constants} $\{J_e : e \in E\}$ such that for each vertex of $G$, the sum of the coupling constants $J_e$ over all $e$ adjacent to $v$ is finite. A graph automorphism of $G$ is a weighted graph automorphism of $(G,J)$ if it preserves the coupling constants, and a weighted graph is said to be \textbf{transitive} if for every $x,y \in V$ there is an automorphism sending $x$ to $y$. 
Note that our weighted graphs are \emph{not} required to be locally finite. 

Let $(G=(V,E),J)$ be a weighted graph with $V$ finite, so that $\sum_{e\in E} J_e <\infty$. (Since we did not assume that $G$ is simple, it is possible for the edge set to be infinite.) For each $q >0$ and $\beta \geq 0$, we let the \textbf{random cluster measure} $\phi_{G,\beta,q}$ be the purely atomic probability measure on $\{0,1\}^E$ defined by
\[
\phi_{G,\beta,q}(\{\omega\}) = \frac{1}{Z_{G,\beta,q}} q^{\hash\mathrm{clusters}(\omega)} \prod_{e\in E} (e^{\beta J_e}-1)^{\omega(e)},
\]
where $Z_{G,\beta,q}$ is a normalizing constant. In particular, $\phi_{G,\beta,q}$ is supported on configurations containing at most finitely many edges. It is easily verified that this measure is well-defined under the above hypotheses, that is, that $Z_{G,\beta,q}<\infty$.
If $q=1$ and $J_e \equiv 1$, the measure $\phi_{G,\beta,q}$ is simply the law of Bernoulli bond percolation with retention probability $\beta = -\log (1-p)$. Similarly, if $q=1$ and the coupling constants are non-constant then the measure $\phi_{G,\beta,q}$ is the law of \emph{inhomogeneous Bernoulli bond percolation}.

Now suppose that $G$ is an infinite weighted graph. For each $q\geq 1$, we define the \textbf{free} and \textbf{wired} random cluster measures $\phif_{\beta,q}$ and $\phiw_{\beta,q}$ on $G$ by taking limits along finite subgraphs of $G$ with either free or wired boundary conditions. Let $(V_n)_{n\geq 1}$ be an increasing sequence of finite subsets of $V$ with $\bigcup_{n\geq 1} V_n = V$. For each $n\geq 1$, we define $G_n$ to be the subgraph of $G$ induced by $V_n$ and let $G_n^*$ be the graph obtained by identifying all vertices in $V \setminus V_n$ and deleting all self-loops that are created. Both $G_n$ and $G_n^*$ inherit the coupling constants of $G$ in the natural way. It is shown in \cite[Chapter 4]{GrimFKbook} that if $q\geq 1$ and $\beta \geq 0$ then the weak limits
\begin{align*}
\phif_{G,\beta,q}:= \mathop{\operatorname{w-lim}}_{n\to\infty} \phi_{G_n,\beta,q} \qquad \text{ and } \qquad \phiw_{G,\beta,q}:= \mathop{\operatorname{w-lim}}_{n\to\infty}\phi_{G^*_n,\beta,q} 
% \left(\{\omega: A \subseteq \omega, B \cap \omega = \emptyset \} \right) &= \lim_{n\to\infty}\phi_{G_n,\beta,q}\left(\{\omega: A \subseteq \omega, B \cap \omega = \emptyset \} \right)\\
% \phiw_{G,\beta,q}\left(\{\omega: A \subseteq \omega, B \cap \omega = \emptyset \} \right) &= \lim_{n\to\infty}\phi_{G_n^*,\beta,q}\left(\{\omega: A \subseteq \omega, B \cap \omega = \emptyset \} \right)
\end{align*}
are well-defined and do not depend on the choice of exhaustion $(V_n)_{n\geq 1}$ for every $q\geq 1$ and $n\geq 1$. (It is not known whether these infinite volume limits are well-defined when $q<1$.) From now on we will drop the $G$ from our notation and write simply $\phif_{\beta,q}$ and $\phiw_{\beta,q}$. Note that $\phiw_{\beta,q}$ stochastically dominates $\phif_{\beta,q}$ for each fixed $\beta\geq 0$ and $q\geq 1$, and that for each $q\geq 1$, $\hash \in \{\mathrm{w},\mathrm{f}\}$, and $0\leq \beta_1 \leq \beta_2$, the measure $\phih_{\beta_2,q}$ stochastically dominates $\phih_{\beta_1,q}$.

The generalization of the differential inequality \eqref{eq:percdiffineq} to the random cluster model may be stated as follows. Here  $\lrDini{\beta}$ denotes the \emph{lower-right Dini derivative}, which we introduce properly in \cref{subsec:derivatives}.
\begin{prop}
\label{cor:FKdiffineq}
Let $(G,J)$ be an infinite transitive weighted graph, and let $q\geq 1$ and $\# \in \{\mathrm{f},\mathrm{w}\}$. Then 
\begin{equation}
\label{eq:FKdiffineq}
\max_{e\in E} \left[\frac{e^{\beta J_e}-1}{J_e}\right]
   \lrDini{\beta} \log \phih_{\beta,q}\left(|K|\geq n\right) 
\geq \frac{1}{2} \left[ \frac{(1- e^{-\lambda})n}{\lambda \sum_{m=1}^{\lceil n/\lambda \rceil}\phih_{\beta,q}(|K|\geq m)}-1\right]
\end{equation}
for every $\beta \geq 0$, $\lambda >0$, and $n\geq 1$.
\end{prop}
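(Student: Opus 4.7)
The plan is to mirror the ghost-field argument sketched after \eqref{eq:percdiffineq}, with Bernoulli percolation replaced by the FK measure and the ordinary OSSS inequality replaced by its monotonic-measure generalisation from \cite{duminil2017sharp}. I would work first in finite volume: fix an exhaustion $(V_n)_{n\geq 1}$ of $V$ and either of the corresponding finite-volume FK measures $\phi_{G_n,\beta,q}$ or $\phi_{G^*_n,\beta,q}$, so that every manipulation lives on a finite edge set. Fix a reference vertex $v_0$ and, independently of the FK configuration, introduce a ghost field $\cG \in \{0,1\}^V$ with $\cG(v)=1$ of probability $1-e^{-\lambda/n}$; call such $v$ \emph{green} and let $\tilde\phi$ denote the joint law. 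The events
\[
A = \{|K(v_0)|\geq n\} \qquad\text{and}\qquad B = \{K(v_0) \cap \{\cG = 1\} \neq \emptyset\}
\]
are both increasing in $(\omega,\cG)$, and $\tilde\phi$ satisfies the FKG lattice condition as the product of two FKG measures.

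Two elementary estimates supply the numerator and denominator of the bracket in \eqref{eq:FKdiffineq}. Conditional on $A$, the cluster $K(v_0)$ offers at least $n$ independent green-trials, giving $\tilde\phi(B\mid A) \geq 1-e^{-\lambda}$. Using $1-e^{-x}\leq x\wedge 1$ together with the layer-cake formula and the independence of $\cG$ from $\omega$,
\[
\tilde\phi(B) = \phih_{\beta,q}\bigl[1-e^{-\lambda|K(v_0)|/n}\bigr] \leq \phih_{\beta,q}\bigl[(\lambda|K(v_0)|/n)\wedge 1\bigr] \leq \frac{\lambda}{n}\sum_{m=1}^{\lceil n/\lambda\rceil}\phih_{\beta,q}(|K|\geq m).
\]
To determine $B$ algorithmically I would query every value of $\cG$, sample a uniform random order on $V$, and in that order launch a cluster exploration from each still-undiscovered green vertex; $B$ holds iff $v_0$ is ever discovered. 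An edge $e=\{x,y\}$ is queried only if at least one of its endpoints lies in the cluster of some green vertex, so by transitivity and the same layer-cake bound applied at $x$ and $y$ in place of $v_0$,
\[
\delta_e \leq 2\,\tilde\phi\bigl(K(v_0) \cap \{\cG=1\}\neq\emptyset\bigr) \leq \frac{2\lambda}{n}\sum_{m=1}^{\lceil n/\lambda\rceil}\phih_{\beta,q}(|K|\geq m).
\]

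These three bounds are then substituted into the two-function OSSS inequality for monotonic measures of \cite{duminil2017sharp}, applied to $\tilde\phi$ with $f=\mathbf{1}_A$, $g=\mathbf{1}_B$, and the algorithm above. The inequality, when expressed in the $\beta$-parametrisation, naturally carries the prefactor $\max_e (e^{\beta J_e}-1)/J_e$ on the logarithmic derivative of $\phih_{\beta,q}(A)$, reflecting the FK weighting of open edges by $e^{\beta J_e}-1$ (and specialising to $p_e/(1-p_e)$ in the Bernoulli case). Writing $D = \lambda\sum_{m=1}^{\lceil n/\lambda\rceil}\phih_{\beta,q}(|K|\geq m)$, the bounds above give numerator $\tilde\phi(B\mid A)-\tilde\phi(B)\geq(1-e^{-\lambda})-D/n$ and denominator $\max_e\delta_e\leq 2D/n$, which collapse to $\tfrac12\bigl[(1-e^{-\lambda})n/D-1\bigr]$, exactly the bracket in \eqref{eq:FKdiffineq}. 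Finally I would pass to infinite volume using weak convergence of $\phi_{G_n,\beta,q}$ (resp.\ $\phi_{G^*_n,\beta,q}$) to $\phif_{\beta,q}$ (resp.\ $\phiw_{\beta,q}$) and locality of $\{|K|\geq n\}$ in $\omega$; the monotonicity---but not the differentiability---of $\phih_{\beta,q}(|K|\geq n)$ in $\beta$ is what forces the statement to use a lower-right Dini derivative.

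The main obstacles I anticipate are (i) correctly extracting the prefactor $(e^{\beta J_e}-1)/J_e$ from the FK OSSS identity, since differentiation of the random cluster measure in $\beta$ produces additional terms from the $q^{\#\text{clusters}}$ weight that must be controlled without breaking the FKG property on which the monotonic-measure OSSS depends; and (ii) handling the infinite-range (non-locally-finite) setting, where the revealment bound must be uniform over an infinite edge set and the sums over $E$ arising from the OSSS inequality must be absolutely convergent---both consequences of transitivity (which collapses the edge-dependent bound to a single number) together with the weighted-graph hypothesis $\sum_{e\ni v}J_e<\infty$.
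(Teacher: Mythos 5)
Your core OSSS computation is exactly the one the paper uses: ghost field of intensity $1-e^{-\lambda/n}$, the two increasing functions $f=\mathbbm{1}(|K_v|\geq n)$ and $g=\mathbbm{1}(K_v$ contains a green vertex$)$, the conditional bound $1-e^{-\lambda}$, the layer-cake bound $\mu[1-e^{-\lambda|K|/n}]\leq \frac{\lambda}{n}\sum_{m=1}^{\lceil n/\lambda\rceil}\mu(|K|\geq m)$, and the revealment bound by twice the supremum over vertices. Where your plan genuinely diverges---and where it has gaps---is in doing all of this in finite volume and then ``passing to infinite volume by weak convergence and locality.'' First, in finite volume the graphs $G_n$ and $G_n^*$ are not transitive, so your step ``by transitivity, the same layer-cake bound applied at $x$ and $y$'' is unavailable. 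For the free measure one can rescue the revealment bound via the stochastic domination $\phi_{G_n,\beta,q}\preccurlyeq\phif_{\beta,q}$, but for the wired measure the domination goes the wrong way and, worse, the identified boundary vertex of $G_n^*$ has an enormous cluster, so edges incident to it have revealment close to $1$ (and in the supercritical regime essentially every edge does); the finite-volume differential inequality you obtain for $\phi_{G_n^*,\beta,q}$ is then weaker by an unbounded factor and does not converge to \eqref{eq:FKdiffineq}. Second, the limit step is not merely weak convergence of probabilities: you must interchange the $\beta$-derivative with the volume limit. The natural route (integrate the finite-volume inequality in $\beta$, pass to the limit, then take a lower-right Dini derivative) only returns the right-hand side evaluated at $\beta+$, because $\beta\mapsto\phih_{\beta,q}(|K|\geq m)$ is increasing but need not be right-continuous (the free measure is only known to be left-continuous), so at possible discontinuity points you get a strictly weaker statement than the claim ``for every $\beta\geq 0$.''

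The paper sidesteps both issues by splitting the argument differently: the OSSS/ghost-field step (\cref{eq:generalDiffIneq}) is carried out directly for the \emph{infinite-volume} measure, which is monotonic and transitive, using a decision forest (one tree per vertex, serialized via \cref{cor:OSSS_forest}) rather than a random ordering of the infinite vertex set; and the derivative input is supplied by \cref{prop:DiniDerivativeFormula}, an infinite-volume Dini-derivative inequality proved by raising $\beta$ only on a finite edge set $A$, comparing with $\phih_{\beta,q}$ by stochastic domination, exchanging $\sup_A$ with the $\liminf$, and using positive association. That proposition is precisely the missing piece your sketch waves at with ``the inequality, when expressed in the $\beta$-parametrisation, naturally carries the prefactor.'' Relatedly, your worry (i) about extra terms from the $q^{\hash\mathrm{clusters}}$ weight is a non-issue in finite volume---\eqref{eq:DerivativeFormula} is an exact identity---the genuine difficulty is justifying a version of it for the infinite-volume measures, which is what \cref{prop:DiniDerivativeFormula} does and your proposal does not.
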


Our main application of \cref{cor:FKdiffineq} is to establish the following sharpness result for the random cluster model.
For each $\hash\in \{\mathrm{f},\mathrm{w}\}$ the \textbf{critical parameter} $\beta_c^\hash$ is defined to be
\[
\beta_c^\hash = \beta_c^\hash(G,q) = \inf\left\{\beta \geq 0 : \phih_{G,\beta,q}(|K_v|=\infty)>0 \text{ for some $v\in V$} \right\}.
\]
We always have that $\beta_c^\mathrm{w} \leq \beta_c^\mathrm{f}$ by stochastic domination. It is known that $\beta_c^\mathrm{w}=\beta_c^\mathrm{f}$ for the random cluster model on $\Z^d$ and other transitive amenable graphs, while it is believed that strict inequality should hold for $q>2$ in the nonamenable case \cite[Chapter 10]{GrimFKbook}. It was shown in \cite{duminil2017sharp} that the following holds for every connected, locally finite, transitive graph, every $q\geq 1$, and every $\hash \in\{\mathrm{f},\mathrm{w}\}$: 
\begin{enumerate}
\item If $\beta < \beta_c^\hash$ then there exist positive constants $C_\beta,c_\beta$ such that
\[
\phih_{\beta,q}(R \geq n) \leq C_\beta e^{-c_\beta n}
\]
for every $n\geq 1$, where $R$ is the radius of the cluster of some fixed vertex $v$  
as measured by the graph metric on $G$.
\item There exists a constant $c$ such that 
\[
\phih_{\beta,q}(|K| =\infty ) \geq c(\beta-\beta_c^\hash)
\]
for every $\beta> \beta_c^\hash$ with $\beta-\beta_c^\hash$ sufficiently small. 
\end{enumerate}
The following theorem improves this result by establishing an exponential tail for the \emph{volume} rather than the radius and also by applying to long-range models, which were not treated by \cite{duminil2017sharp}. (Note that in the case of finite-range models on $\Z^d$, the results of \cite{duminil2017sharp} were known to imply an exponential tail on the volume by earlier conditional results \cite[Section 5.6]{GrimFKbook}.)

\begin{thm}
\label{thm:sharpness}
 Let $(G,J)$ be an infinite transitive weighted graph. Let $q\geq 1$ and $\# \in \{\mathrm{f},\mathrm{w}\}$. Then the following hold.
 \begin{enumerate}
 \item For every $0\leq \beta < \beta_c^\#$ there exist positive constants $C_\beta$ and $c_\beta$ such that
\[
\phih_{\beta,q}(|K|\geq n) \leq C_\beta e^{-c_\beta n}
\]
for every $n\geq 1$.
\item
The inequality
\[
\phih_{\beta,q}(|K|=\infty) \geq \frac{\beta-\beta_c^\hash}{2\max_{e\in E} \left[\frac{e^{\beta J_e}-1}{J_e}\right]+\beta-\beta_c^\hash}
\]
holds for every $\beta>\beta_c^\hash$.
\end{enumerate}
\end{thm}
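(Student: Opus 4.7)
My plan is to run a Menshikov-type dichotomy using \cref{cor:FKdiffineq}. Write $C(\beta) := \max_{e\in E}(e^{\beta J_e}-1)/J_e$, $f_n(\beta) := \phih_{\beta,q}(|K|\geq n)$, $S_n(\beta) := \sum_{m=1}^n f_m(\beta)$, $\chi^\#(\beta) := \lim_{n\to\infty} S_n(\beta)$, and $\theta(\beta) := \lim_{n\to\infty} f_n(\beta) = \phih_{\beta,q}(|K|=\infty)$, and set
\[
\tilde\beta_c := \sup\bigl\{\beta \geq 0 : \chi^\#(\beta) < \infty\bigr\}.
\]
I will prove (1) for every $\beta < \tilde\beta_c$ and (2) for every $\beta > \tilde\beta_c$. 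Since (1) forces $\theta\equiv 0$ on $[0,\tilde\beta_c)$ and (2) forces $\theta>0$ on $(\tilde\beta_c,\infty)$, the two together identify $\tilde\beta_c = \beta_c^\#$, and the theorem follows.

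For (1), fix $\beta < \beta_1 < \tilde\beta_c$, so that $S_n(\beta') \leq \chi^\#(\beta_1) < \infty$ uniformly for $\beta'\in[\beta,\beta_1]$. Setting $\lambda=1$ in \eqref{eq:FKdiffineq} and using monotonicity of $C$ yields $\lrDini{\beta'} \log f_n(\beta') \geq c_1 n$ for some $c_1>0$ and all $n$ beyond a threshold depending on $\chi^\#(\beta_1)$. Integrating over $[\beta,\beta_1]$ produces $\log f_n(\beta_1) - \log f_n(\beta) \geq c_1(\beta_1-\beta)n$, and the claimed exponential tail follows from $f_n(\beta_1)\leq 1$.

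For (2), fix $\tilde\beta_c < \beta_0 < \beta$ and $\lambda>0$. Integrating \eqref{eq:FKdiffineq} over $[\beta_0,\beta]$ gives
\[
\log f_n(\beta) - \log f_n(\beta_0) \geq \int_{\beta_0}^\beta \frac{1}{2C(s)}\left[\frac{(1-e^{-\lambda})n}{\lambda S_{\lceil n/\lambda\rceil}(s)} - 1\right] ds.
\]
The bound $\frac{(1-e^{-\lambda})n}{\lambda S_{\lceil n/\lambda\rceil}(s)} \leq \frac{1}{f_{\lceil n/\lambda\rceil}(\beta_0)}$ follows from $S_N \geq N f_N$ together with monotonicity $f_N(s)\geq f_N(\beta_0)$ for $s\geq \beta_0$, and the right-hand side converges to $1/\theta(\beta_0)\in(0,\infty)$ as $n\to\infty$. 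This supplies a uniform-in-$s$ dominating function, so dominated convergence sends the integrand to $\frac{1}{2C(s)}[(1-e^{-\lambda})/\theta(s) - 1]$ and yields
\[
\log\theta(\beta) - \log\theta(\beta_0) \geq \int_{\beta_0}^\beta \frac{1}{2C(s)}\left[\frac{1-e^{-\lambda}}{\theta(s)}-1\right] ds.
\]
Monotone convergence as $\lambda\to\infty$ removes the factor $1-e^{-\lambda}$. Rewriting via $h(\beta) := -\log(1-\theta(\beta))$ upgrades this to the Dini-derivative bound $\lrDini{\beta} h(\beta) \geq 1/(2C(\beta))$, integrating which yields $1-\theta(\beta)\leq(1-\theta(\beta_0))\exp\bigl(-(G(\beta)-G(\beta_0))\bigr)$, with $G(\beta):=\int_{\tilde\beta_c}^\beta ds/(2C(s))$. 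Sending $\beta_0\downarrow\tilde\beta_c$, bounding $1-\theta(\beta_0)\leq1$ and $G(\beta)\geq (\beta-\tilde\beta_c)/(2C(\beta))$ by monotonicity of $C$, and applying the elementary inequality $1-e^{-x}\geq x/(1+x)$ for $x\geq0$ produces the stated mean-field lower bound.

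The main obstacle is the limit exchange in part (2): one cannot take $\lambda\to\infty$ at finite $n$ because $\lceil n/\lambda\rceil$ collapses to $1$, and conversely the order $n\to\infty$ first (for fixed $\lambda$) and then $\lambda\to\infty$ is what produces the correct $1/\theta(s)$ factor in the limit. The domination crucially uses $\theta(\beta_0)>0$, which is why the integration must start strictly above $\tilde\beta_c$ with the bound near $\tilde\beta_c$ recovered only in the final sending step. A secondary care-point is the passage from the integral inequality to the pointwise bound $1-\theta\leq \exp(-G)$; since $\theta$ is not a priori absolutely continuous, this should be carried out within the Dini-derivative framework of \cref{subsec:derivatives} rather than by classical ODE manipulation.
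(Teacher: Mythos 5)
Your treatment of part (1) is essentially sound, but your argument for part (2) is circular. You need to prove that $\theta(\beta) := \phih_{\beta,q}(|K|=\infty) > 0$ for every $\beta > \tilde\beta_c$, and yet your dominated-convergence step rests on the assertion that $1/f_{\lceil n/\lambda\rceil}(\beta_0) \to 1/\theta(\beta_0) \in (0,\infty)$, i.e., that $\theta(\beta_0) > 0$ for $\beta_0 > \tilde\beta_c$ --- which is exactly the statement you set out to establish. If there were an ``intermediate phase'' $(\tilde\beta_c,\beta_c^\#)$ on which $\chi^\# = \infty$ but $\theta = 0$, your candidate dominating function $1/f_{\lceil n/\lambda\rceil}(\beta_0)$ would blow up as $n\to\infty$ and the limit interchange would fail; moreover the left-hand side $\log f_n(\beta) - \log f_n(\beta_0)$ is then a difference of two quantities both tending to $-\infty$, so nothing is forced to blow up and no contradiction arises. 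Ruling out precisely this intermediate phase is the whole content of the Menshikov dichotomy, and it is the nontrivial step your proposal skips over.

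The paper's proof avoids the circularity by working not with $\log f_n$ but with the Cesaro averages $T_k(\beta) = \frac{1}{\log k}\sum_{n=1}^k \frac{1}{n} P_n(\beta)$, which converge to $\theta(\beta)$, together with the telescoping estimate $P_n/\Sigma_n \geq \log\Sigma_{n+1} - \log\Sigma_n$. Summing over $n \leq k$ produces $\log\Sigma_{k+1}$, and this is controlled via the fact that $\limsup_{k}\log\Sigma_{k+1}(\beta_1)/\log k \geq 1$ for every $\beta_1$ strictly above the threshold --- a consequence of defining $\tilde\beta_c^\#$ via failure of power-law decay of $P_n$ (rather than, as you do, via finiteness of $\chi^\#$, which is a priori a weaker condition and does not guarantee the needed growth of $\Sigma_{k+1}$). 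This gives a uniform lower bound on $\lrDini{\beta} T_k$ on $[\beta_1,\beta_2]$ in the large-$k$ limit with no positivity assumption on $\theta$, which after integration yields item (2). Your back-end manipulation ($\lambda\to\infty$, the passage to $h=-\log(1-\theta)$, and the inequality $1-e^{-x}\geq x/(1+x)$) does recover the advertised constant and is a reasonable alternative to the paper's final integration step, but the missing piece upstream is fatal to the argument as proposed.
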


An immediate corollary of \cref{thm:sharpness} is that $\phih_{\beta,q}[|K|]<\infty$ for every $\beta<\beta_c^\hash$ under the same hypotheses, which did not follow from the results of \cite{duminil2017sharp} in the case that the $G$ has exponential volume growth. This allows us to apply the method of \cite{Hutchcroft2016944} and the fact that $\phif_{\beta,q}$ is weakly left-continuous in $\beta$  for each $q\geq 1$ \cite[Proposition 4.28c]{GrimFKbook} to deduce the following corollary for the random cluster model on transitive graphs of exponential growth. This adaptation has already been carried out in the case $q=2$ (the FK-Ising model) by Raoufi \cite{1606.03763}.

\begin{corollary}
\label{cor:Fekete}
Let $G$ be a connected, locally finite, transitive graph of exponential growth and let $q\geq 1$. Then $\phif_{\beta_c^\mathrm{f},q}(|K|=\infty)=0$.
\end{corollary}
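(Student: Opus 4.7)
The plan is to combine Theorem~\ref{thm:sharpness}(1) with the weak left-continuity of $\beta \mapsto \phif_{\beta,q}$ \cite[Proposition~4.28c]{GrimFKbook} and the strategy of \cite{Hutchcroft2016944}, exactly as Raoufi carried out in the FK-Ising case $q=2$ in \cite{1606.03763}. The only input missing from Raoufi's blueprint in the random cluster setting for general $q$ was the volume sharpness, which is now supplied by Theorem~\ref{thm:sharpness}(1).

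I argue by contradiction and assume $\theta_c := \phif_{\beta_c^\mathrm{f},q}(|K|=\infty) > 0$. For each fixed $n\geq 1$ the event $\{|K|\geq n\}$ is increasing and can be written as the monotone union of the cylinder events $\{|K\cap B_r(v)|\geq n\}$; two applications of monotone convergence (in $r$ at fixed $\beta$ and in $\beta$ at fixed $r$) combined with weak left-continuity applied to the cylinder approximants then yield
\[
\lim_{\beta\uparrow \beta_c^\mathrm{f}} \phif_{\beta,q}(|K|\geq n) = \phif_{\beta_c^\mathrm{f},q}(|K|\geq n) \geq \theta_c
\]
for every $n$. Summing over $n$, the susceptibility $\chi^\mathrm{f}(\beta) := \phif_{\beta,q}[|K|]$ tends to $\infty$ as $\beta\uparrow\beta_c^\mathrm{f}$; on the other hand, Theorem~\ref{thm:sharpness}(1) guarantees $\chi^\mathrm{f}(\beta)<\infty$ for every $\beta<\beta_c^\mathrm{f}$. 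Thus the susceptibility is finite throughout the subcritical regime yet diverges at the boundary.

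The Hutchcroft--Raoufi scheme now extracts a contradiction from this divergence by exploiting exponential volume growth $\operatorname{gr}(G)>1$. Using FKG and translation invariance of $\phif_{\cdot,q}$, together with the positivity of $\theta_c$ transferred to cylinder events via weak left-continuity, one derives a lower bound on the two-point function $\tau^\mathrm{f}_\beta(v,u) := \phif_{\beta,q}(v\leftrightarrow u)$ that, when summed over the exponentially many vertices $u \in B_n(v)$, forces $\chi^\mathrm{f}(\beta) = \sum_u \tau^\mathrm{f}_\beta(v,u)$ to already be infinite for some $\beta<\beta_c^\mathrm{f}$, contradicting Theorem~\ref{thm:sharpness}(1).

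The main obstacle is verifying that every step of the original Bernoulli argument in \cite{Hutchcroft2016944} admits an FKG and finite-energy replacement valid for every $\phif_{\beta,q}$ with $q\geq 1$. This was already performed by Raoufi in \cite{1606.03763} for $q=2$, and his argument depends only on properties shared by all $\phif_{\beta,q}$ with $q\geq 1$, so the adaptation should be essentially verbatim modulo bookkeeping of constants.
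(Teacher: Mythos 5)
This is the paper's own route: the corollary is obtained exactly as you propose, by combining \cref{thm:sharpness}(1) (which supplies the finite subcritical susceptibility that was the missing ingredient on graphs of exponential growth) with the weak left-continuity of $\beta \mapsto \phif_{\beta,q}$ and the method of \cite{Hutchcroft2016944}, whose adaptation to the monotonic setting was carried out by Raoufi \cite{1606.03763} for $q=2$ and applies verbatim for all $q\geq 1$. One small correction of framing: the contradiction is not the divergence of $\chi^{\mathrm{f}}$ as $\beta\uparrow\beta_c^{\mathrm{f}}$ (true but innocuous), but rather, as your final paragraph indicates, the Fekete/two-point-function step showing that $\phif_{\beta_c^{\mathrm{f}},q}(|K|=\infty)>0$ together with exponential growth would force $\chi^{\mathrm{f}}(\beta)=\infty$ already at some $\beta<\beta_c^{\mathrm{f}}$, contradicting \cref{thm:sharpness}(1).
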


% In fact, the methods of \cite{hutchcroft2018locality} should allow one to use \cref{thm:sharpness} to prove a polynomial bound on the tail of the volume at criticality under the same hypotheses as \cref{cor:Fekete}, but we do not pursue this here. 

Finally, we generalize of \cref{thm:gammadelta_percolation,thm:gammagap_percolation} to the random cluster model. 
\begin{thm}
\label{thm:gammadelta_FK} Let $(G,J)$ be an infinite transitive weighted graph. Let $\beta_0>0$, $q\geq 1$, and $\# \in \{\mathrm{f},\mathrm{w}\}$, and suppose that there exist constants $C>0$ and $\delta >1$ such that
\[
\phih_{\beta_0,q}\left(|K| \geq n\right) \leq C n^{-1/\delta}
\]
for every $n\geq 1$. Then the following hold:
\begin{enumerate}
	\item
There exist positive constants $c_1$ and $C_1$ such that
\[
\phih_{\beta,q}\left(|K| \geq n\right) \leq  C_1 n^{-1/\delta} \exp\left[ -c_1 (\beta_0-\beta)^\delta n \right] 
\]
for every $n\geq 1$ and $0 \leq \beta < \beta_0$.
	\item There exists a positive constant $c_2$ such that
	\[
\phih_{\beta,q}\left[|K|^k\right] \leq k! \left[c_2(\beta_0-\beta)\right]^{-\delta k +1} 
\]
for every $k\geq 1$ and $0\leq \beta < \beta_0$.
\end{enumerate}
\end{thm}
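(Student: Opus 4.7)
The plan is to bootstrap \cref{cor:FKdiffineq} in two stages: a first pass producing a crude exponential-in-$n^{1/\delta}$ tail bound, which upgrades via summation to a uniform-in-$n$ estimate on the tail sum $S_n(\beta):=\sum_{m=1}^n\phih_{\beta,q}(|K|\geq m)$ of the critical rate $(\beta_0-\beta)^{-(\delta-1)}$, and then a second pass through the same differential inequality producing the sharp exponential-in-$n$ bound of Part 1. Part 2 will follow from Part 1 by the usual tail-sum computation. Throughout I write $p_n(\beta):=\phih_{\beta,q}(|K|\geq n)$ and note that $M:=\max_{e\in E}(e^{\beta_0 J_e}-1)/J_e$ is finite, because $x\mapsto(e^x-1)/x$ is increasing and the summability assumption on $J$ forces $\sup_e J_e<\infty$.

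For the first pass, summing $p_n(\beta_0)\leq Cn^{-1/\delta}$ yields $S_n(\beta_0)\leq C_0 n^{1-1/\delta}$ (using $\delta>1$), and stochastic monotonicity extends this uniformly to $\beta\in[0,\beta_0]$. Applying \cref{cor:FKdiffineq} with $\lambda=1$ then gives $\lrDini{\beta'}\log p_n(\beta')\geq c_0 n^{1/\delta}$ for every $\beta'\in[0,\beta_0]$ and every sufficiently large $n$; integrating this Dini inequality via the standard monotone-function fact $f(b)-f(a)\geq\int_a^b\lrDini{x}f(x)\,dx$ and combining with $p_n(\beta_0)\leq Cn^{-1/\delta}$ produces the preliminary estimate $p_n(\beta)\leq C'n^{-1/\delta}\exp[-c_0(\beta_0-\beta)n^{1/\delta}]$. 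Summing this in $m$ via the substitution $y=m^{1/\delta}$ converts the sum into $\delta\int_0^\infty y^{\delta-2}e^{-c_0(\beta_0-\beta)y}\,dy=\delta\Gamma(\delta-1)(c_0(\beta_0-\beta))^{-(\delta-1)}$, which converges precisely because $\delta>1$ and yields the uniform-in-$n$ bound $S_n(\beta)\leq C_1(\beta_0-\beta)^{-(\delta-1)}$ for every $\beta\in[0,\beta_0)$.

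For the second pass, I reinject this improved bound into \cref{cor:FKdiffineq} with $\lambda=1$: for every $\beta'$ with $\beta_0-\beta'\geq\eta_n:=(2C_1/[(1-e^{-1})n])^{1/(\delta-1)}$ the differential inequality becomes $\lrDini{\beta'}\log p_n(\beta')\geq c_2 n(\beta_0-\beta')^{\delta-1}$. In the regime $\beta_0-\beta\geq 2\eta_n$, integrating from $\beta$ to $\beta_0-\eta_n$ and using $\eta_n^\delta\leq(\beta_0-\beta)^\delta/2^\delta$ gives $\log p_n(\beta_0-\eta_n)-\log p_n(\beta)\geq c_3 n(\beta_0-\beta)^\delta$, and combining with $p_n(\beta_0-\eta_n)\leq p_n(\beta_0)\leq Cn^{-1/\delta}$ delivers Part 1; in the complementary regime $n(\beta_0-\beta)^{\delta-1}=O(1)$ the exponential factor is bounded below by a positive constant (using $\beta_0-\beta\leq\beta_0$), so the trivial bound $p_n(\beta)\leq Cn^{-1/\delta}$ already gives Part 1 after inflating constants. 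Part 2 is then a direct consequence of Part 1: writing $\bE[|K|^k]\leq k\sum_m m^{k-1}p_m(\beta)$ and comparing to the Gamma integral with parameter $a=c_1(\beta_0-\beta)^\delta$ yields $\bE[|K|^k]\leq C''k\Gamma(k-1/\delta)c_1^{1/\delta-k}(\beta_0-\beta)^{1-\delta k}$, and the elementary estimate $k\Gamma(k-1/\delta)\leq C'''k!$ (via $\Gamma(k-1/\delta)\leq\Gamma(k)=(k-1)!$ with a bounded multiplicative adjustment at $k=1$) allows matching the claimed form $k![c_2(\beta_0-\beta)]^{-\delta k+1}$ by choosing $c_2$ sufficiently small.

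The main conceptual obstacle is the bootstrap itself: the first-pass bound $\exp[-c_0(\beta_0-\beta)n^{1/\delta}]$ looks much weaker than the target $\exp[-c_1 n(\beta_0-\beta)^\delta]$, yet it is just strong enough to render $\sup_n S_n(\beta)$ finite at precisely the rate $(\beta_0-\beta)^{-(\delta-1)}$, which, when fed back into \cref{cor:FKdiffineq}, upgrades the right-hand side from $\sim n^{1/\delta}$ to $\sim n(\beta_0-\beta')^{\delta-1}$; integrating this in $\beta'$ yields the sharp exponent $n(\beta_0-\beta)^\delta$. The secondary technicality is that the improved $S_n$ bound degenerates as $\beta'\uparrow\beta_0$, forcing the second integration to stop short at $\beta_0-\eta_n$ and the small-$n$ regime $n\lesssim(\beta_0-\beta)^{-(\delta-1)}$ to be handled by a separate trivial estimate; the threshold $\eta_n$ is calibrated so that the $S_n$ bound at $\beta_0-\eta_n$ matches the saturation level of the differential inequality, making this dichotomy clean.
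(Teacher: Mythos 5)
Your proof is correct and rests on the same two-pass bootstrap as the paper: a first application of \cref{cor:FKdiffineq} yields the preliminary tail bound $\preceq n^{-1/\delta}\exp[-c(\beta_0-\beta)n^{1/\delta}]$, summing over $n$ gives a rate $(\beta_0-\beta)^{-(\delta-1)}$, and feeding this back into the differential inequality upgrades the exponent to $n(\beta_0-\beta)^\delta$. The only genuine divergence is in the second pass. The paper integrates the $\lambda\downarrow 0$ form (\cref{cor:FKdiffineq2}, equivalently the integrated bound \eqref{eq:integrated2}) from $\beta$ to an intermediate point $\beta_1=(\beta_0+\beta)/2$, freezing the first moment at $\phih_{\beta_1,q}[|K|]\preceq(\beta_0-\beta_1)^{-\delta+1}$ via monotonicity; this choice of $\beta_1$ makes the exponent come out as $(\beta_0-\beta)^\delta n$ without any case analysis. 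You instead keep $\lambda=1$ and track the $\beta'$-dependence of the tail sum $S_n(\beta')\preceq(\beta_0-\beta')^{-(\delta-1)}$ all the way up to $\beta_0$, which forces the integration to stop short at $\beta_0-\eta_n$ with $\eta_n\asymp n^{-1/(\delta-1)}$ and necessitates the dichotomy $\beta_0-\beta\geq 2\eta_n$ versus $\beta_0-\beta<2\eta_n$. Both give the same final exponent; the paper's midpoint trick buys a cleaner argument with no regime split, while your version is slightly more transparent about exactly where the differential inequality saturates. The Part~2 computation via the Gamma integral matches the paper's.

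One small point of hygiene: after the first pass, the inequality $\lrDini{\beta'}\log p_n(\beta')\geq c_0 n^{1/\delta}$ only holds for $n$ above a threshold $n_0$ depending on $C_0$ (to absorb the $-1$ in the bracket of \cref{cor:FKdiffineq}); you note this implicitly by writing "for every sufficiently large $n$", and indeed the finitely many small $n$ are handled by inflating $C'$, but it is worth stating that the adjustment of constants for $n<n_0$ must be done before the summation step that produces $S_n(\beta)\preceq(\beta_0-\beta)^{-(\delta-1)}$, so that this bound really is uniform in $n$. This is exactly the role of the additive correction term $+\frac{\beta_0-\beta}{2C(\beta_0)}$ already built into the paper's integrated inequality \eqref{eq:integrated1}.
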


\begin{thm}
\label{thm:gammagap_FK}
Let $(G,J)$ be an infinite transitive weighted graph. Let $\beta_0>0$, $q\geq 1$, and $\# \in \{\mathrm{f},\mathrm{w}\}$, and suppose that there exist constants $C>0$ and $\delta >1$ such that
\[
\phih_{\beta,q}\left[|K|\right] \leq C (\beta_0-\beta)^{-\gamma}
\]
for every $n\geq 1$. Then there exists a positive constant $c$ such that
\[
\phih_{\beta,q}\left[|K|^k\right] \leq k! \left[c(\beta_0-\beta)\right]^{-(k-1)(\gamma+1)- \gamma} 
\]
for every $n,k\geq 1$ and $0 \leq \beta <\beta_0$.
\end{thm}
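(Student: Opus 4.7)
The plan is to follow the two-step strategy used in \cref{thm:gammadelta_FK}: first extract a refined pointwise tail bound from \cref{cor:FKdiffineq} and the first-moment hypothesis, and then integrate the tail against $kn^{k-1}$ via Abel summation to obtain the desired moment estimate. The essential new ingredient compared to \cref{thm:gammadelta_FK} is that one feeds the first-moment hypothesis, rather than a tail hypothesis, into the denominator on the right-hand side of \cref{cor:FKdiffineq}.

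For the tail bound, I would show that there exist positive constants $C_1$ and $c_1$ depending on $C$, $\gamma$, $\beta_0$, and $M_{\beta_0}:=\max_{e\in E}[(e^{\beta_0 J_e}-1)/J_e]$ such that
\[
\phih_{\beta,q}(|K|\geq n) \leq C_1\,(\beta_0-\beta)^{-\gamma}\,n^{-1}\exp\bigl(-c_1\,n\,(\beta_0-\beta)^{\gamma+1}\bigr)
\]
for every $\beta\in[0,\beta_0)$ and $n\geq 1$. Setting $\beta^*=(\beta+\beta_0)/2$, so that $\beta_0-s\geq (\beta_0-\beta)/2$ for $s\in[\beta,\beta^*]$, the hypothesis at each such $s$ gives $\phih_{s,q}[|K|]\leq 2^\gamma C(\beta_0-\beta)^{-\gamma}$ uniformly. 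Using the trivial bound $\sum_{m=1}^{\lceil n\rceil}\phih_{s,q}(|K|\geq m)\leq \phih_{s,q}[|K|]$ in \cref{cor:FKdiffineq} with $\lambda=1$ produces, for every $n$ exceeding a threshold $n_0$ of order $(\beta_0-\beta)^{-\gamma}$ chosen so that the bracket on the right is at least half its leading term,
\[
\lrDini{s}\log\phih_{s,q}(|K|\geq n) \geq c_2\,n\,(\beta_0-\beta)^\gamma
\]
uniformly for $s\in[\beta,\beta^*]$, with $c_2$ absorbing the constant $(1-e^{-1})/(2^{\gamma+2}CM_{\beta_0})$. Integrating this Dini-derivative inequality across $[\beta,\beta^*]$---justified as in \cref{subsec:derivatives} by the monotonicity of $\phih_{\cdot,q}(|K|\geq n)$ in $\beta$---and using $\beta^*-\beta=(\beta_0-\beta)/2$ gives the exponential factor $\exp(-c_1 n(\beta_0-\beta)^{\gamma+1})$ with $c_1=c_2/2$; combining this with Markov at $\beta^*$ (namely $\phih_{\beta^*,q}(|K|\geq n)\leq 2^\gamma C(\beta_0-\beta)^{-\gamma}/n$) yields the tail bound for $n\geq n_0$. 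For $n<n_0$ the quantity $n(\beta_0-\beta)^{\gamma+1}\leq n_0(\beta_0-\beta)^{\gamma+1}$ is bounded by a $\beta$-independent constant, so the exponential factor is bounded below and Markov applied at $\beta$ gives the stated bound after enlarging $C_1$.

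For the moment bound, Abel summation together with $n^k-(n-1)^k\leq kn^{k-1}$ gives $\phih_{\beta,q}[|K|^k]\leq k\sum_{n\geq 1}n^{k-1}\phih_{\beta,q}(|K|\geq n)$, and substituting the tail bound yields
\[
\phih_{\beta,q}[|K|^k] \leq kC_1(\beta_0-\beta)^{-\gamma}\sum_{n\geq 1} n^{k-2}\exp\bigl(-c_1 n(\beta_0-\beta)^{\gamma+1}\bigr).
\]
For $k\geq 2$ the sum is bounded by a constant multiple of $(k-2)!\,[c_1(\beta_0-\beta)^{\gamma+1}]^{-(k-1)}$ via comparison with the gamma integral, and using $k(k-2)!\leq k!$ this gives $\phih_{\beta,q}[|K|^k]\leq 2C_1\,k!\,c_1^{-(k-1)}(\beta_0-\beta)^{-\gamma-(\gamma+1)(k-1)}$; the case $k=1$ is precisely the hypothesis. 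Since $-\gamma-(\gamma+1)(k-1)=-(k-1)(\gamma+1)-\gamma$, the power of $(\beta_0-\beta)$ matches the target, and choosing $c>0$ with $c\leq \min\{c_1^{1/(\gamma+1)},\,(2C_1)^{-1/\gamma},\,C^{-1/\gamma}\}$ ensures that $c^{-(k-1)(\gamma+1)-\gamma}$ dominates both $2C_1c_1^{-(k-1)}$ and $C$ for every $k\geq 1$, packaging the constants into $k!\,[c(\beta_0-\beta)]^{-(k-1)(\gamma+1)-\gamma}$. The main obstacle I anticipate is the careful integration of the Dini-derivative inequality in the tail-bound step, in particular tracking how the prefactor $M_s$ and the subtracted $-1/2$ in the bracket affect the exponential constant; the final constant optimization is routine.
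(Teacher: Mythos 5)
Your proof is correct and follows essentially the same route as the paper's: the paper obtains the tail bound $\phih_{\beta,q}(|K|\geq n)\preceq (\beta_0-\beta)^{-\gamma}n^{-1}\exp(-c\,n(\beta_0-\beta)^{\gamma+1})$ by combining the pre-integrated inequality \eqref{eq:integrated2} (the $\lambda\to 0$ limit of \cref{cor:FKdiffineq}) with Markov's inequality at $\beta_1=(\beta_0+\beta)/2$, then feeds this into the moment computation of \eqref{eq:moment_calculation}, which is exactly your strategy. The only cosmetic differences are that you integrate \cref{cor:FKdiffineq} with $\lambda=1$ yourself (with the explicit $n\geq n_0$ split to handle the $-1$ in the bracket, which the paper absorbs into the bounded additive term of \eqref{eq:integrated2}) and you use Abel summation where the paper uses the integral representation of $\phih_{\beta,q}[|K|^k]$.
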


It follows from these theorems that the critical exponent inequalities $\gamma \leq \delta -1$ and $\Delta \leq \gamma + 1$ hold for the random cluster model whenever these exponents are  well-defined and $q\geq 1$. 

We remark that the previous literature on critical exponents for the random cluster model with $q\notin \{1,2\}$ seems rather limited, although the sharpness results of \cite{duminil2017sharp} imply the mean-field bound $\beta \leq 1$.  It is also known that the exponent inequalities we derive here are sharp in mean-field for $q\in [1,2)$, where the exponents are the same as for percolation \cite{bollobas1996random}. 
Note that it is expected that when $q$ is large the random cluster model undergoes a \emph{discontinuous} (first-order)  phase transition, see \cite{bollobas1996random,laanait1991interfaces,duminil2016discontinuity,ray2019short} and references therein. 

\section{Background}

\subsection{Monotonic measures}

Let $A$ be a countable set. A probability measure $\mu$ on $\{0,1\}^A$ is said to be \textbf{positively associated} if
\[
\mu(f(\omega)g(\omega)) \geq \mu(f(\omega))\mu(g(\omega))
\]
for every pair of increasing functions $f,g: \{0,1\}^A \to \R$, and is said to be
\textbf{monotonic} if 
\[
\mu\left(\omega(e)=1 \mid \omega|_F = \xi\right) \geq \mu\left(\omega(e)=1 \mid \omega|_F=\zeta\right)
\]
whenever $F \subset A$, $e\in A$, and $\xi,\zeta \in \{0,1\}^F$ are such that $\xi \geq \zeta$. It follows immediately from this definition that if $\mu$ is a monotonic measure on $\{0,1\}^A$ and $\nu$ is a monotonic measure on $\{0,1\}^B$, then the product measure $\mu \otimes \nu$ is monotonic on $\{0,1\}^{A \amalg B}$. 

Monotonic measures are positively associated, but positively associated measures need not be monotonic, see \cite[Chapter 2]{GrimFKbook}. Indeed, it is proven in \cite[Theorem 2.24]{GrimFKbook} that if $A$ is finite and $\mu$ gives positive mass to every element of $\{0,1\}^A$ then it is monotonic if and only if it satisfies the \textbf{FKG lattice condition}, which states that
\[
\mu(\omega_1 \vee \omega_2)\mu(\omega_1 \wedge \omega_2) \geq \mu(\omega_1)\mu(\omega_2)
\]
for every $\omega_1,\omega_2 \in \{0,1\}^A$. In particular, it follows readily from this that the random cluster measures with $q\geq 1$ on  any (finite or countably infinite) weighted graph $(G,J)$ are monotonic. 

\subsection{Derivative formulae}
\label{subsec:derivatives}

Let $G=(G,J)$ be a \emph{finite} weighted graph. Then for every function $F:\{0,1\}^E \to \R$, we have the derivative formula  \cite[Theorem 3.12]{GrimFKbook}
\begin{equation}
\label{eq:DerivativeFormula}
\frac{d}{d\beta} \phi_{\beta,q}\left[F(\omega)\right] = \sum_{e\in E} \frac{J_e}{e^{\beta J_e}-1} \Cov_{\phi_{\beta,q}}\left[F(\omega),\omega(e)\right],
\end{equation}
where we write $\Cov_\mu[X,Y]=\mu(XY)-\mu(X)\mu(Y)$ for the covariance of two random variables $X$ and $Y$ under the measure $\mu$.
% This formula is closely related to Russo's formula in the special case of Bernoulli  percolation. 

To discuss the generalization of this derivative formula to the infinite volume case,  we must first introduce \emph{Dini derivatives}, referring the reader to \cite{MR1390758} for further background. The \textbf{lower-right Dini derivative} of a function $f:[a,b] \to \R$ is defined to be
\[
\lrDini{x}f(x) =  \liminf_{\eps \downarrow  0} \frac{f(x+\eps)-f(x)}{\eps}
\]
for each $x \in [a,b)$.  
Note that if $f:[a,b]\to \R$ is increasing then we have that
\[
f(b) - f(a) \geq \int_{a}^b \lrDini{x}f(x)  \dif x,
\]
so that we may use differential inequalities involving Dini derivatives in essentially the same way that we use standard differential inequalities. (It is a theorem of Banach \cite[Theorem 3.6.5]{MR1390758} that measurable functions have measurable Dini derivatives, so that the above integral is well-defined.) We also have the validity of the usual logarithmic derivative formula
\[
\lrDini{x} \log f(x) = \frac{1}{f(x)}\lrDini{x}f(x).
\]

The following proposition yields a version of \eqref{eq:DerivativeFormula} valid in the infinite-volume setting.
\begin{prop}
\label{prop:DiniDerivativeFormula}
Let $G=(G,J)$ be a weighted graph and let $F:\{0,1\}^E \to \R$ be an increasing function, and let $\hash\in \{\mathrm{f},\mathrm{w}\}$. Then
\begin{equation}
\label{eq:DiniDerivativeFormula}
\lrDini{\beta} \phih_{\beta,q}\left[F(\omega)\right] \geq \sum_{e\in E} \frac{J_e}{e^{\beta J_e}-1} \Cov_{\phih_{\beta,q}}\left[F(\omega),\omega(e)\right]
\end{equation}
for every $\beta \geq 0$.
\end{prop}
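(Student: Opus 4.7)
The plan is to bootstrap the finite-volume derivative formula \eqref{eq:DerivativeFormula} to infinite volume via a standard exhaustion, exploiting that $q\geq 1$ makes every random-cluster measure FKG, so that each covariance on the right-hand side of \eqref{eq:DerivativeFormula} is non-negative when $F$ is increasing. Dini derivatives enter because $\beta\mapsto\phih_{\beta,q}[F]$ is only monotone and one-sidedly continuous in $\beta$, not known to be differentiable everywhere.

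Fix $\#\in\{\mathrm{f},\mathrm{w}\}$, an increasing bounded local function $F$, $\beta\geq 0$, and $\eps>0$. Integrating \eqref{eq:DerivativeFormula} on the finite graph $G_n^\#$ from $\beta$ to $\beta+\eps$ yields
\[
\phi_{G_n^\#,\beta+\eps,q}[F]-\phi_{G_n^\#,\beta,q}[F]=\int_\beta^{\beta+\eps}\sum_{e\in E_n}\frac{J_e}{e^{\beta' J_e}-1}\Cov_{\phi_{G_n^\#,\beta',q}}[F,\omega(e)]\,d\beta',
\]
with a pointwise non-negative integrand. As $n\to\infty$, the left-hand side converges to $\phih_{\beta+\eps,q}[F]-\phih_{\beta,q}[F]$ by weak convergence and boundedness/locality of $F$. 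For the right-hand side, the pointwise-in-$(\beta',e)$ convergence of each covariance under weak convergence (using boundedness/locality of $F\cdot\omega(e)$), combined with two successive applications of Fatou's lemma (first on the sum over $e$, then on the $\beta'$-integral), produces the integrated inequality
\[
\phih_{\beta+\eps,q}[F]-\phih_{\beta,q}[F]\geq\int_\beta^{\beta+\eps}\sum_{e\in E}\frac{J_e}{e^{\beta' J_e}-1}\Cov_{\phih_{\beta',q}}[F,\omega(e)]\,d\beta'.
\]

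Dividing by $\eps$ and taking $\liminf_{\eps\downarrow 0}$ produces $\lrDini{\beta}\phih_{\beta,q}[F]$ on the left; on the right, a lower bound by the integrand $g(\beta)$ would follow from right-lower-semicontinuity of $g$ at $\beta$. I would obtain this by splitting cases: at the countably many right-discontinuity points of $\beta'\mapsto\phih_{\beta',q}[F]$, the difference quotient $(\phih_{\beta+\eps,q}[F]-\phih_{\beta,q}[F])/\eps$ blows up as $\eps\downarrow 0$ so the claimed Dini inequality is trivial; at every other $\beta$, continuity of the prefactors $J_e/(e^{\beta' J_e}-1)$ together with weak right-continuity of the measures gives right-continuity of each summand, and a dominated-convergence argument using the local integrability of $g$ from the integrated inequality passes this to the sum. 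Finally, the extension from bounded local $F$ to general increasing $F$ with finite $\phih_{\beta,q}[F]$ is by monotone convergence along a cylinder approximation $F_k\uparrow F$, using boundedness of $\omega(e)$ to pass to the limit in each covariance on the right-hand side. The main obstacle is this last step of upgrading the integrated inequality to the pointwise Dini-derivative statement at \emph{every} $\beta$, which is delicate precisely because of the possible right-discontinuities of the infinite-volume random-cluster measures in $\beta$.
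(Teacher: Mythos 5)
Your route --- integrate the finite-volume identity \eqref{eq:DerivativeFormula} over $[\beta,\beta+\eps]$, pass to the infinite-volume limit with Fatou (legitimate, since all covariances are non-negative for increasing $F$ by FKG), and then try to differentiate the integrated inequality --- is genuinely different from the paper's. The paper never differentiates, or takes limits in $\beta$ of, a fully infinite-volume quantity: it perturbs the coupling constants only on a \emph{finite} edge set $A$ (freezing $\beta_0$ elsewhere), so that $\phif_{\beta,\beta_0,q,A}[F]$ is honestly differentiable with derivative a finite sum over $A$, then bounds the difference quotient of $\phih_{\beta,q}[F]$ from below by that of $\phih_{\beta,\beta_0,q,A}[F]$ via stochastic domination for $\beta\geq\beta_0$, and finally takes a supremum over $A$ using positive association. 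That scheme needs no continuity of $\beta\mapsto\phih_{\beta,q}$ whatsoever, and that is precisely where your argument has a gap.

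The gap is in the step you yourself flag as delicate: deducing $\liminf_{\eps\downarrow 0}\eps^{-1}\int_\beta^{\beta+\eps}g(\beta')\,d\beta'\geq g(\beta)$ from ``weak right-continuity of the measures''. For $\hash=\mathrm{w}$ this premise is a standard fact and your argument can be completed (Fatou over the sum of non-negative right-continuous summands gives the needed right lower semicontinuity of $g$). But for $\hash=\mathrm{f}$ only weak \emph{left}-continuity of $\phif_{\beta,q}$ in $\beta$ is known --- the paper itself cites exactly this fact --- so the premise is unavailable, and your case analysis does not rescue it: excluding the right-discontinuity points of $\beta'\mapsto\phih_{\beta',q}[F]$ (where the Dini derivative is $+\infty$ and the claim is trivial) does not make the summands right-continuous, because each covariance also involves $\phif_{\beta',q}[\omega(e)]$ and $\phif_{\beta',q}[F\omega(e)]$, which are increasing in $\beta'$ with their own (countable, but different) sets of right jumps. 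An upward right jump of $\phif_{\beta',q}[F]\,\phif_{\beta',q}[\omega(e)]$ pushes the right liminf of $\Cov_{\phif_{\beta',q}}[F,\omega(e)]$ \emph{below} its value at $\beta$, so right lower semicontinuity of $g$ can fail at exceptional points not covered by your trivial case, and there your integrated inequality does not yield \eqref{eq:DiniDerivativeFormula}, which is claimed for every $\beta\geq 0$. To close this you would need a separate argument at those countably many points, or simply the paper's finite-perturbation device; as a smaller point, your final monotone-approximation step also tacitly assumes $F(\omega\mathbbm{1}_{E_k})\uparrow F(\omega)$, which holds for the events used later but not for arbitrary increasing measurable $F$.
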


% \begin{proof}
% We prove the claim in the case $\hash=\mathrm{f}$,  the case $\hash=\mathrm{w}$  being similar.

\begin{proof}
We prove the claim in the case $\hash=\mathrm{f}$,  the case $\hash=\mathrm{w}$  being similar.
Fix $\beta_0\geq 0$ and let $A$ be a finite set of edges. Let $(V_n)_{n\geq 1}$ be an  exhaustion of $V$, let $G_n$ be the subgraph of $G$ induced by $V_n$  and let $E_n$ be the edge set of $G_n$.  For each  $n\geq 1$, $\alpha,\beta \geq 0$ and $q\geq 1$ we define $\phi_{G_n,\beta,\beta_0,q,A}(\{\omega\})$  by
\[
\phi_{G_n,\beta,\beta_0,q,A}(\{\omega\}) =  \frac{1}{Z} q^{\hash \text{clusters} (\omega)}\prod_{e\in A \cap E_n} (e^{\beta J_e}-1)^{\omega(e)}\prod_{e\in E_n \setminus A} (e^{\beta_0 J_e}-1)^{\omega(e)}
\]
for an appropriate normalizing constant $Z=Z(n,\beta,q,A)$. The usual proof of the existence of the infinite volume random cluster measures yields that the measures $\phi_{G_n,\beta,\beta_0,q,A}$ converge weakly to a limiting measure $\phif_{\beta,\beta_0,q,A}$ as $n\to \infty$. Using the assumption that $A$ is finite, it is straightforward to adapt the usual proof of \eqref{eq:DerivativeFormula} to show that $\phif_{\beta,\beta_0,q,A}\left[F(\omega)\right]$ is differentiable and that
\[
\frac{d}{d\beta} \phif_{\beta,\beta_0,q,A}\left[F(\omega)\right] = \sum_{e\in A} \frac{J_e}{e^{\beta J_e}-1} \Cov_{\phif_{\beta,\beta_0,q,A}}\left[F(\omega),\omega(e)\right]
\]
for every function $F(\omega)\to \R$ with $\phif_{\beta_0,q}\left[F(\omega)\right]<\infty$. On the other hand, the FKG property implies that $\phif_{\beta,q}$ stochastically dominates $\phif_{\beta,\beta_0,q,A}$ for every $\beta \geq \beta_0$, and we deduce that if $F$ is increasing then
\begin{align*}
\liminf_{\beta \downarrow \beta_0} \frac{\phif_{\beta,q}\left[F(\omega)\right] - \phif_{\beta_0,q}\left[F(\omega)\right]}{\beta-\beta_0} & \geq \liminf_{\beta \downarrow \beta_0} \sup_A \frac{1}{\beta-\beta_0}\left[\phif_{\beta,\beta_0,q,A}\left[F(\omega)\right] - \phif_{\beta_0,q,A} \left[F(\omega)\right] \right]\\&\geq \sup_A \liminf_{\beta \downarrow \beta_0}  \frac{1}{\beta-\beta_0}\left[\phif_{\beta,\beta_0,q,A}\left[F(\omega)\right] - \phif_{\beta_0,q,A} \left[F(\omega)\right] \right]\\ &= \sup_A \sum_{e\in A} \frac{J_e}{e^{\beta J_e}-1} \Cov_{\phif_{\beta_0,q,A}}\left[F(\omega),\omega(e)\right]\\
&= \sum_{e\in E} \frac{J_e}{e^{\beta_0 J_e}-1} \Cov_{\phif_{\beta_0,q}}\left[F(\omega),\omega(e)\right],
\end{align*}
where the final equality follows by positive association.
The claim follows since $\beta_0\geq 0$ was arbitrary. 
\end{proof}

\subsection{Decision trees and the OSSS inequality}

Let $\N=\{1,2,\ldots\}$, and let $E$ be a countable set. 
 % and let $E^* = E^\N \cup \bigcup_{n=1}^\infty E^n$ be the set of finite or infinite sequences in $E$.
  A \textbf{decision tree} is a function 
  $T:\{0,1\}^E \to E^\N$ from subsets of $E$ to infinite $E$-valued sequences with the property that
$T_1(\omega) = e_1$ for some  fixed $e_1 \in E$, and for each $n \geq 2$ there exists a function $S_n : (E\times \{0,1\})^{n-1} \to E$ such that
\[T_n(\omega) = S_n\left[\left(T_i,\omega(T_i)\right)_{i=1}^{n-1}\right].
\]
In other words, $T$ is a deterministic procedure for querying the values of $\omega \in \{0,1\}^E$, that starts by querying the value of $\omega(e_1)$ and chooses which values to query at each subsequent step as a function of the values it has already observed. 

Now let $\mu$ be a probability measure on $\{0,1\}^E$ and let $\omega$ be a random variable with law $\mu$. Given a decision tree $T$ and $n\geq 1$ we let $\cF_n(T)$ be the $\sigma$-algebra generated by the random  variables $\{T_i(\omega) : 1 \leq i \leq n\}$ and let $\cF(T) =\bigcup \cF_n(T)$. For each measurable function $f:\{0,1\}^E \to [-1,1]$, we say that $T$ \textbf{computes $f$} if $f(\omega)$ is measurable with respect to the $\mu$-completion of $\cF(T)$. By the martingale convergence theorem, if $f$  is $\mu$-integrable this is equivalent to the statement that
\[
\mu\left[f(\omega) \mid \cF_n(T)\right] \xrightarrow[n\to \infty]{} f(\omega) \qquad \mu\text{-a.s.}
\]
For each $e\in E$, we define the \textbf{revealment probability} 
\[\delta_e(T,\mu) = \mu\left(\exists n \geq 1 \text{ such that } T_{n}(\omega)=e\right).\]
Finally, following \cite{o2005every}, we define for each probability measure $\mu$ on $\{0,1\}^E$ and each pair of measurable functions $f,g:\{0,1\}^E\to \R$ the quantity
\[
\CoVr_\mu[f,g] = \mu \otimes \mu\left[|f(\omega_1)-g(\omega_2)|\right]- \mu  \left[|f(\omega_1)-g(\omega_1)|\right]
\]
where $\omega_1,\omega_2$ are drawn independently from the measure $\mu$, so that if $f$ and $g$ are $\{0,1\}$-valued then
\begin{equation}
\label{eq:CoVr_Cov}
\CoVr_\mu[f,g] = 2 \Cov_\mu[f,g] = 2\mu\bigl(f(\omega)=g(\omega)=1\bigr)-2\mu\bigl(f(\omega)=1\bigr)\mu\bigl(g(\omega)=1\bigr).
\end{equation}
We are now ready to state Duminil-Copin, Tassion, and Raoufi's generalization of the OSSS inequality to monotonic measures \cite{duminil2017sharp}.

\begin{thm}
\label{thm:OSSS}
Let $E$ be a finite or countably infinite set and let $\mu$ be a monotonic measure on $\{0,1\}^E$. Then for every pair of measurable, $\mu$-integrable functions $f,g : \{0,1\}^E \to \R$ with $f$ increasing and every decision tree  $T$ computing $g$ we have that
\[
\frac{1}{2}\left|\CoVr_\mu\left[f,g\right]\right| \leq \sum_{e\in E} \delta_e(T,\mu) \Cov_\mu\left[f,\omega(e)\right].
\]
\end{thm}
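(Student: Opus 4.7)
The plan is to follow the coupling-and-telescoping proof scheme of O'Donnell-Saks-Schramm-Servedio, replacing the independence used in their original argument with the monotone coupling guaranteed by the monotonicity hypothesis. First I would reduce to the case that $E$ is finite by exhausting $E$ with finite sets $E_n$, approximating $f$ and $g$ by their conditional expectations on $\sigma(\omega|_{E_n})$, and truncating $T$ after $n$ queries; martingale convergence (giving $f$ and $g$ from their projections) and dominated convergence (for $\CoVr$ and $\Cov$) then let me pass to the limit, noting that revealment probabilities for the truncated decision trees are monotone increasing to $\delta_e(T,\mu)$.

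With $E$ finite, I would set up a two-copy coupling. Let $\omega_1, \omega_2 \sim \mu$ be independent, and extend $T$ (without changing its output on $g$) so that it eventually queries every edge; running this extension on $\omega_2$ produces a random ordering $e_1(\omega_2), \ldots, e_{|E|}(\omega_2)$ of the coordinates. For $0 \leq k \leq |E|$ define the hybrid configuration $\omega^{(k)}$ to agree with $\omega_1$ on $\{e_1,\ldots,e_k\}$ and with $\omega_2$ elsewhere, so that $\omega^{(0)} = \omega_2$ and $\omega^{(|E|)} = \omega_1$. Then $\CoVr_\mu[f,g]$ is exactly the telescoping sum
\[
\sum_{k=1}^{|E|} \bbE\bigl[|f(\omega^{(k)}) - g(\omega_2)| - |f(\omega^{(k-1)}) - g(\omega_2)|\bigr],
\]
and the reverse triangle inequality bounds the absolute value of each term by $\bbE|f(\omega^{(k)}) - f(\omega^{(k-1)})|$, a quantity supported on the event that $\omega_1(e_k) \neq \omega_2(e_k)$ since $\omega^{(k-1)}$ and $\omega^{(k)}$ agree everywhere except possibly at $e_k$.

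The core step, and the main obstacle, is bounding the $k$-th expected increment by $\sum_e \bbP(e_k = e)\,\Cov_\mu[f, \omega(e)]$, so that summing over $k$ and using $\sum_k \bbP(e_k = e) = \delta_e(T,\mu)$ reassembles the right-hand side. To do this I would condition on $(e_i, \omega_2(e_i))_{i < k}$, which both determines $e_k$ and fixes all coordinates of $\omega^{(k-1)}$ and $\omega^{(k)}$ outside $\{e_k\}$; under this conditioning the residual randomness is still governed by a monotonic measure on $\{0,1\}^{E \setminus \{e_1,\ldots,e_{k-1}\}}$ (monotonicity being preserved under conditioning on finitely many coordinate values), and the monotonicity hypothesis supplies a monotone coupling between the laws of $\omega$ conditioned on $\omega(e_k)=0$ and on $\omega(e_k)=1$. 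This coupling, combined with $f$ being increasing, converts the per-step increment into a conditional covariance of $f$ with $\omega(e_k)$; the tower property and the identity $\CoVr = 2\Cov$ for indicators produce the factor $1/2$. Replacing product-measure independence with this monotone coupling is the sole place where the monotonicity assumption is genuinely needed, and it is also the step most sensitive to how $\mu$ behaves under conditioning on partial information.
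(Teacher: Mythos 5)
First, note that the paper does not reprove this statement: \cref{thm:OSSS} is quoted from Duminil-Copin--Raoufi--Tassion, with the remark after it explaining that the two-function finite case follows by modifying their argument as in O'Donnell--Saks--Schramm--Servedio and that countable $E$ is handled by a martingale argument. Your reduction to finite $E$ is consistent with that route and is fine in spirit. The problem is the finite-$E$ core of your argument, which is essentially the original OSSS splicing proof with the word ``independence'' replaced by ``monotone coupling'' at the one step where you admit the difficulty lies -- and that replacement, as described, does not work. Concretely: conditioning on $(e_i,\omega_2(e_i))_{i<k}$ does \emph{not} fix the coordinates of $\omega^{(k-1)}$ and $\omega^{(k)}$ off $e_k$, since on $\{e_1,\dots,e_{k-1}\}$ the hybrids carry the (unconditioned) values of $\omega_1$, and off the queried set they carry the unrevealed values of $\omega_2$. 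More fundamentally, because $\mu$ is not a product measure, the spliced configurations $\omega^{(k)}$ do not have law $\mu$; after the conditioning, the residual randomness is a product of two \emph{separately} conditioned copies of $\mu$, and $f$ is evaluated on a configuration mixing coordinates from both. A monotone coupling between $\mu(\cdot\mid\omega(e_k)=1)$ and $\mu(\cdot\mid\omega(e_k)=0)$ gives you, at best, a \emph{conditional} covariance given the revealed history, and there is no general inequality turning that into the unconditional $\Cov_\mu[f,\omega(e_k)]$ appearing in the statement. This conversion is precisely the content of the theorem, and the published proof is structured differently to achieve it: no splicing of two independent samples is used; instead each intermediate configuration keeps the revealed values and has its unrevealed coordinates resampled from the appropriate conditional measure, so that every intermediate configuration has law exactly $\mu$, and the monotone couplings between consecutive conditional measures are what make the unconditional covariance appear. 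Your proposal omits this mechanism entirely, so the heart of the proof is missing.

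There is also a bookkeeping error in your telescoping. Once you extend $T$ so that it eventually queries every coordinate, you have $\sum_k \P(e_k=e)=1$ for every $e$, not $\delta_e(T,\mu)$, so summing your per-step bounds cannot reassemble the right-hand side of the inequality. The standard OSSS remedy is to stop the telescoping at the random time at which $g$ has been computed and to use that the spliced configuration at that time still has law $\mu$, so that its contribution equals $\mu[\,|f-g|\,]$; but that law-preservation is exactly what fails for non-product $\mu$, so this fix is not available to you either. In short, both the endpoint identification and the revealment accounting, as well as the per-step covariance bound, rely on product structure in your write-up, and repairing them requires the genuinely different coupling construction of Duminil-Copin, Raoufi, and Tassion rather than a local patch.
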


% Note that if $\mu$ is a product measure then the condition that $f$ is increasing is unnecessary. 

\begin{remark}
In \cite{duminil2017sharp}, only the special case of \cref{thm:OSSS} in which $E$ is finite and $f=g$ is stated. The version with $E$ finite but $f$ not necessarily equal to $g$ follows by an easy modification of their proof, identical to that carried out in \cite[Section 3.3]{o2005every} -- note in particular that when running this modified proof only $f$ is required to be increasing. The restriction that $E$ is finite can be removed via a straightforward Martingale argument \cite[Remark 2.4]{duminil2017sharp}.
\end{remark}

The statement above will be somewhat inconvenient in our analysis as the algorithm we use is naturally described as a \emph{parallel algorithm} rather than a serial algorithm. To allow for such parallelization, we define a \textbf{decision forest} to be a collection of decision trees $F=\{T^i : i \in I\}$ indexed by a countable set $I$. Given a decision forest $F=\{T^i : i \in I\}$ we let $\cF(F)$ be the smallest $\sigma$-algebra containing all of the $\sigma$-algebras $\cF(T^i)$. Given a measure $\mu$ on $\{0,1\}^E$, a function $f:\{0,1\}^E\to \R$ and a decision forest $F$, we say that $F$ \textbf{computes} $f$ if $f$ is measurable with respect to the $\mu$-completion of the $\sigma$-algebra $\cF(F)$. We also define the revealment probability $\delta_e(F,\mu)$ to be the probability under $\mu$ that there exists $i\in I$ and $n\geq 1$ such that $T^i_{n}(\omega)=e$.

\begin{corollary}
\label{cor:OSSS_forest}
Let $E$ be a finite or countably infinite set and let $\mu$ be a monotonic measure on $\{0,1\}^E$. Then for every pair of measurable, $\mu$-integrable functions $f,g : \{0,1\}^E \to \R$ with $f$ increasing and every decision forest $F$ computing $g$ we have that
\[
\frac{1}{2}\left|\CoVr_\mu\left[f,g\right]\right| \leq \sum_{e\in E} \delta_e(F,\mu) \Cov_\mu\left[f,\omega(e)\right].
\]
\end{corollary}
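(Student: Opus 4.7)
The plan is to reduce \cref{cor:OSSS_forest} to \cref{thm:OSSS} by serializing the decision forest $F=\{T^i:i\in I\}$ into a single decision tree whose set of queries coincides with the union of the queries of the $T^i$.

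First I would fix any bijection $\kappa:\N\to I\times\N$, written $\kappa(k)=(i_k,n_k)$, that is monotone on each fiber, i.e., such that whenever $i_k=i_{k'}$ and $k<k'$ we have $n_k<n_{k'}$. Any diagonal-style enumeration of the countable set $I\times\N$ furnishes such a $\kappa$. I then define a candidate decision tree $T$ by $T_k(\omega)=T^{i_k}_{n_k}(\omega)$. The only nontrivial step is to verify that this really is a decision tree in the sense of the earlier definition: $T_k(\omega)$ must be a deterministic function of $\bigl(T_j(\omega),\omega(T_j(\omega))\bigr)_{j<k}$. But by the decision-tree property of $T^{i_k}$, the edge $T^{i_k}_{n_k}(\omega)$ is a deterministic function of $\bigl(T^{i_k}_m(\omega),\omega(T^{i_k}_m(\omega))\bigr)_{m<n_k}$, and the fiberwise monotonicity of $\kappa$ ensures that each such pair has already appeared in the history of $T$ strictly before step $k$.

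Once $T$ is known to be a decision tree, two identifications finish the argument. The $\sigma$-algebra $\cF(T)$ coincides with $\cF(F)$ up to $\mu$-completion, since both are generated by the same family $\{(T^i_n(\omega),\omega(T^i_n(\omega))):i\in I,\,n\geq 1\}$; hence $T$ computes $g$ whenever $F$ does. Moreover, for each $e\in E$,
\[
\delta_e(T,\mu)=\mu\bigl(\exists k\geq 1:\,T_k(\omega)=e\bigr)=\mu\bigl(\exists i\in I,\,n\geq 1:\,T^i_n(\omega)=e\bigr)=\delta_e(F,\mu).
\]
Applying \cref{thm:OSSS} to $T$ then delivers the claimed inequality verbatim. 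There is no genuine obstacle here; the only point to be mindful of is constructing an enumeration $\kappa$ that respects the fiber order, which is routine since $I$ is countable.
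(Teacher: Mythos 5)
Your serialization argument is correct and is essentially the same approach the paper takes: the paper also reduces to \cref{thm:OSSS} by interleaving the trees of $F$ into a single decision tree, using the specific fiber-monotone injection $(i,j)\mapsto p_i^j$ (with padding at non-prime-powers) where you allow an arbitrary fiber-monotone bijection $\kappa:\N\to I\times\N$. Your verification that $T$ satisfies the decision-tree axioms, that $\cF(T)$ agrees with $\cF(F)$, and that $\delta_e(T,\mu)=\delta_e(F,\mu)$ matches what the paper leaves implicit.
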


\begin{proof}
We may assume that $I = \{1,2,\ldots\}$. The claim may be deduced from \cref{thm:OSSS} by ``serializing'' the decision forest $F$ into a decision tree $T$. This can be done, for example, by executing the $j$th step of the decision tree $T^i$ at the time $p_i^j$ where $p_i$ is the $i$th prime, and re-querying the first input queried by $T^1$ at all times that are not prime powers. This decision tree $T$ clearly computes the same functions as the decision forest $F$ and has $\delta_e(T,\mu)=\delta_e(F,\mu)$ for every $e\in E$, so that the claim follows from \cref{thm:OSSS}.
\end{proof}

\section{Derivation of the differential inequality}

Given a graph $G=(V,E)$, a vertex $v$ and a configuration $\omega\in \{0,1\}^E$, we write $K_v=K_v(\omega)$ for the cluster of $v$ in $\omega$.

\begin{prop}
\label{eq:generalDiffIneq}
Let $G=(V,E)$ be a countable graph and let $\mu$ be a monotonic measure on $\{0,1\}^E$. Then
\begin{equation}
\sum_{e\in E} \Cov_\mu\left[\mathbbm{1}(|K_v|\geq n),\omega(e)\right]
\geq
 \left[ \myfrac[0.5em]{(1-e^{-\lambda}) - \mu\left[1-e^{-\lambda |K_v|/n}\right] }{2\sup_{u \in V}\mu\left[1-e^{-\lambda |K_u|/n}\right]}\right] \mu(|K_v|\geq n)
\end{equation}
for every $v\in V$, $n\geq 1$ and $\lambda >0 $.
\end{prop}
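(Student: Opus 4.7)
The plan is to apply the two-function OSSS inequality for decision forests (Corollary \ref{cor:OSSS_forest}) to the product of $\mu$ with a \textbf{ghost field}, as outlined in the introduction. Let $\nu$ denote the product Bernoulli measure on $\{0,1\}^V$ in which each coordinate independently equals $1$ with probability $1-e^{-\lambda/n}$; write $\cG$ for the random configuration and call a vertex $u$ \emph{green} when $\cG(u)=1$. The product $\mu' := \mu \otimes \nu$ is monotonic on $\{0,1\}^{E \sqcup V}$ because each factor is. Set
\[
f(\omega,\cG) = \mathbbm{1}\bigl(|K_v(\omega)| \geq n\bigr),
\qquad
g(\omega,\cG) = \mathbbm{1}\bigl(K_v(\omega) \text{ contains a green vertex}\bigr);
\]
both $f$ and $g$ are increasing and $\{0,1\}$-valued, so \eqref{eq:CoVr_Cov} gives $\tfrac{1}{2}\CoVr_{\mu'}[f,g] = \Cov_{\mu'}[f,g]$.

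Next I construct a decision forest $F = \{T^u : u \in V\}$ computing $g$. The tree $T^u$ first queries $\cG(u)$; if $\cG(u) = 1$ it then performs a breadth-first exploration of the cluster of $u$, querying every edge incident to each vertex discovered so far, and in all remaining steps (as well as in the case $\cG(u)=0$) it simply repeats its first query. From the combined queries of $F$ one learns the values $\cG(u)$ for every $u$, as well as the full cluster $K_u$ of every green vertex; thus one can decide whether $v$ lies in any such cluster, and $F$ computes $g$. For $e = \{x,y\} \in E$, the tree $T^u$ queries $e$ only if $u$ is green and its BFS reaches $x$ or $y$, which forces $u \in K_x \cup K_y$. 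Taking a union bound over $u$ (and using that $\cG$ is independent of $\omega$ under $\mu'$) gives
\[
\delta_e(F,\mu') \leq \mu'\bigl(K_x \text{ contains a green vertex}\bigr) + \mu'\bigl(K_y \text{ contains a green vertex}\bigr) \leq 2\sup_{u \in V} \mu\bigl[1 - e^{-\lambda|K_u|/n}\bigr].
\]

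Finally, since $|K_v|\geq n$ implies $1 - e^{-\lambda|K_v|/n} \geq 1 - e^{-\lambda}$, a short computation using the independence of $\omega$ and $\cG$ gives
\begin{align*}
\Cov_{\mu'}[f,g] &= \mu\bigl[\mathbbm{1}(|K_v|\geq n)(1-e^{-\lambda|K_v|/n})\bigr] - \mu(|K_v|\geq n)\,\mu\bigl[1-e^{-\lambda|K_v|/n}\bigr] \\
&\geq \mu(|K_v|\geq n)\Bigl[(1-e^{-\lambda}) - \mu\bigl[1-e^{-\lambda|K_v|/n}\bigr]\Bigr].
\end{align*}
If this lower bound is non-positive then the claim is automatic, since positive association forces each $\Cov_\mu[\mathbbm{1}(|K_v|\geq n),\omega(e)] \geq 0$. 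Otherwise, plugging the revealment bound into Corollary \ref{cor:OSSS_forest} and noting that $\Cov_{\mu'}[f,\cG(u)]=0$ for every $u \in V$ (since $f$ depends only on $\omega$) and $\Cov_{\mu'}[f,\omega(e)] = \Cov_\mu[\mathbbm{1}(|K_v|\geq n),\omega(e)]$ for $e \in E$, one obtains the proposition after dividing by the revealment bound. The main points to verify carefully are that $F$ genuinely computes $g$ despite never exploring beyond green clusters, and the union-bound step that introduces the factor of $2$ in the denominator; everything else is bookkeeping.
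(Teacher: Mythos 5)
Your proposal is correct and follows essentially the same route as the paper: the same ghost field, the same pair $f,g$, the same decision forest exploring clusters of green vertices, the same revealment bound $\delta_e \leq 2\sup_u \mu\bigl[1-e^{-\lambda|K_u|/n}\bigr]$, and the same covariance computation. Your extra remark about the non-positive case and positive association is harmless (and positive association is indeed the implicit reason one may replace $\delta_e$ by its upper bound inside the OSSS sum), so nothing is missing.
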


\begin{proof}
Let $\omega \in \{0,1\}^E$ be a random variable with law $\mu$. Independently of $\omega$, let $\eta \in \{0,1\}^V$ be a random subset of $V$ where vertices are included independently at random with inclusion probability $h=1-e^{-\lambda/n} \leq \lambda/n$. We refer to $\eta$ as the \textbf{ghost field} and call vertices with $\eta(v)=1$ \textbf{green}. Let $\P$ and $\E$ denote probabilities and expectations taken with respect to the joint law of $\omega$ and $\eta$, which is monotonic. 
Fix a vertex $v$, and let $f,g:\{0,1\}^{E \cup V} \to \{0,1\}$ be the increasing functions defined by
\[f(\omega,\eta) = \mathbbm{1}(|K_v(\omega)|\geq n) \qquad \text{ and } \qquad g(\omega,\eta)=\mathbbm{1}\left(\eta(u) =1 \text{ for some } u \in K_v(\omega)\right).\]

For each $u\in V$, we define $T^u$ to be a decision tree that first queries the status of $\eta(u)$, halts if it discovers that $\eta(u)=0$, and otherwise explores the cluster of $u$ in $\omega$. We now define this decision tree more formally. Fix an enumeration of $E$ and a vertex $u \in V$.   Set $T^u_{1}(\omega,\eta)=u$. If $\eta(u)=0$, set $T^u_{n}=u$ for every $n\geq 2$ (i.e., halt). If $\eta(u)=1$, we define $T^u_{n}(\omega,\eta)$ for $n\geq 2$ as follows. At each step of the decision tree, we will have a set of vertices $U^u_n$, a set of revealed open edges $O^u_n$, and a set of revealed closed edges $C^u_n$. We initialize by setting $U^u_1=u$ and $O^u_n=C^u_n=\emptyset$. Suppose that $n \geq 1$ and that we have computed $(U^u_k,O^u_k,C^u_k,T^u_k)$ for $k \leq n$. If every edge with at least one endpoint in $U^u_n$ is either in $O^u_n$ or $C^u_n$ then we set $(U^u_{n+1},O^u_{n+1},C^u_{n+1},T^u_{n+1})=(U^u_{n},O^u_{n},C^u_{n},T^u_{n})$ (i.e., we halt). Otherwise, we set $T^u_{n+1}$ to be the element of the set of edges that touch $U^u_n$ but are not in $O^u_n$ or $C^u_n$ that is minimal with respect to the fixed enumeration of $E$. If $\omega(T^u_{n+1})=1$ we set $U^u_{n+1}$ to be the union of $U^u_n$ with the endpoints of $T^u_{n+1}$, set $O^u_{n+1}=O^u_n \cup \{T^u_{n+1}\}$ and set $C^u_{n+1}=C^u_n$. Otherwise, $\omega(T^u_{n+1})=0$ and we set $U^u_{n+1}=U^u_n$, set $O^u_{n+1}=O^u_n$ and set $C^u_{n+1}=C^u_n \cup \{T^u_{n+1}\}$.
It is easily verified that this does indeed define a decision tree $T^u$, and that
\[
\{ x \in V \cup E : T^u_n(\omega,\eta)=x \text{ for some $n\geq 1$}\} = \begin{cases}
\{u\} & \eta(u)=0\\
\{u\} \cup E(K_u(\omega)) & \eta(u)=1,
\end{cases} 
\]
where $E(K_u(\omega))$ is the set of edges with at least one endpoint in $K_u(\omega)$.
In particular, we clearly have that the decision forest $F=\{T^u : u \in V\}$ computes $g$.

Since $f$ and $g$ are increasing and $\{0,1\}$-valued, we may apply \cref{cor:OSSS_forest} and \eqref{eq:CoVr_Cov} to deduce that
\begin{align*}
\Cov_\P[f,g] \leq \sum_{e\in E} \delta_e(F,\mu) \Cov_\P[f,\omega(e)] + \sum_{u\in V}\delta_u(F,\mu)\Cov_\P[f,\eta(v)]
= \sum_{e\in E} \delta_e(F,\mu) \Cov_\mu[f,\omega(e)],
\end{align*}
where the equality on the right follows since $f(\omega,\eta)=\mathbbm{1}(|K_v(\omega)|\geq n)$ is independent of $\eta$.
Now, an edge $e$ is revealed by $F(\omega,\eta)$ if and only if the cluster of at least one endpoint of $e$ contains a green vertex, and, writing $\eta(A)=\sum_{u \in A} \eta(u)$ for each set $A\subseteq V$, it follows that
\[\delta_e(F,\mu) \leq 2 \sup_{u\in V} \P
\left( \eta(K_u) \geq 1 \right) = 2 \sup_{u\in V} \mu\left[1-e^{-\lambda |K_u|/n}\right]\]
for every $e\in E$ and hence that
\begin{align}
\label{eq:Covproof1}
\Cov_\P[f,g] \leq 2 \sup_{u\in V} \mu\left[1-e^{-\lambda |K_u|/n}\right] \sum_{e\in E}  \Cov_\mu[f,\omega(e)].
\end{align}
To conclude, simply note that
\begin{align}
\label{eq:Covproof2}
\Cov_\P[f,g] 
 &= \P \left(|K_v| \geq n, \; \eta(K_v) \geq 1 \right)  -  \P \left(\eta(K_v) \geq 1 \right) \mu\left(|K_v| \geq n\right)\nonumber
\\
&= \mu \left[\left(1-e^{-\lambda |K_v|/n}\right)\mathbbm{1}(|K_v| \geq n)\right]  - \mu \left[1-e^{-\lambda |K_v|/n}\right] \mu\left(|K_v| \geq n\right)\nonumber\\
 &\geq (1-e^{-\lambda}) \mu\left(|K_v| \geq n\right) - \mu \left[1-e^{-\lambda |K_v|/n}\right] \mu\left(|K_v| \geq n\right).
\end{align}
Combining \eqref{eq:Covproof1} and \eqref{eq:Covproof2} and rearranging yields the desired inequality.
\end{proof}

\begin{proof}[Proof of \cref{cor:FKdiffineq}]
This is immediate from \cref{prop:DiniDerivativeFormula,eq:generalDiffIneq} together with the 
 inequality $1-e^{-\lambda |K_v|/n} \leq 1 \wedge \frac{\lambda|K_v|}{n}$, which yields the bound
\[\phih_{\beta,q}\left[1-e^{-\lambda |K_v|/n}\right] \leq \frac{\lambda}{n}\,\phih_{\beta,q}\!\left[\frac{n}{\lambda} \wedge |K_v| \right] \leq \frac{\lambda}{n}\sum_{m=1}^{\lceil n/\lambda \rceil} \phih_{\beta,q}(|K_v|\geq m).
% \leq \frac{\lambda}{n}\sum_{m=0}^{\lceil n/\lambda \rceil-1} \mu(|K_v|\geq m). 
\qedhere\]
 % \cref{eq:generalDiffIneq} has the following consequence for the random cluster model.
\end{proof}

Taking the limit as $\lambda \downarrow 0$ in \cref{cor:FKdiffineq} yields the  following corollary.

\begin{corollary}
\label{cor:FKdiffineq2}
Let $(G,J)$ be an infinite transitive weighted graph, and let $q\geq 1$ and $\# \in \{\mathrm{f},\mathrm{w}\}$. Then
\begin{equation}
\label{eq:FKdiffineq}
\max_{e\in E} \left[\frac{e^{\beta J_e}-1}{J_e}\right]
   \lrDini{\beta} \log \phih_{\beta,q}\left(|K|\geq n\right) 
\geq \frac{1}{2} \left[ \frac{n}{\phih_{\beta,q}[|K|]}-1\right]
\end{equation}
for every $n\geq 1$ and $\beta \geq 0$.
\end{corollary}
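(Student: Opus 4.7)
The plan is to obtain this as the $\lambda \downarrow 0$ limit of \cref{cor:FKdiffineq}. Since the left-hand side of \eqref{eq:FKdiffineq} has no dependence on $\lambda$, the task reduces to identifying the limit of the right-hand side as $\lambda \downarrow 0$.

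The key step is to rewrite the fraction inside the brackets by dividing its numerator and denominator by $\lambda$, giving
\[
\frac{(1-e^{-\lambda})n}{\lambda \sum_{m=1}^{\lceil n/\lambda\rceil}\phih_{\beta,q}(|K|\geq m)} = \frac{\bigl((1-e^{-\lambda})/\lambda\bigr)\, n}{\sum_{m=1}^{\lceil n/\lambda\rceil}\phih_{\beta,q}(|K|\geq m)}.
\]
As $\lambda \downarrow 0$ the prefactor $(1-e^{-\lambda})/\lambda$ tends to $1$, and since $\lceil n/\lambda\rceil \to \infty$, monotone convergence together with the standard level-set identity $\phih_{\beta,q}[|K|] = \sum_{m=1}^{\infty}\phih_{\beta,q}(|K|\geq m)$ shows that the denominator increases to $\phih_{\beta,q}[|K|]$. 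Passing to the limit in \cref{cor:FKdiffineq} then yields the claimed bound.

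The one subtlety worth flagging is the case $\phih_{\beta,q}[|K|]=\infty$, in which the limit on the right-hand side becomes $-1/2$; here the inequality holds trivially since the monotonicity of $\beta\mapsto \phih_{\beta,q}(|K|\geq n)$ (a consequence of stochastic monotonicity of $\phih_{\beta,q}$ in $\beta$) forces $\lrDini{\beta}\log\phih_{\beta,q}(|K|\geq n) \geq 0$, and the prefactor $\max_{e\in E}[(e^{\beta J_e}-1)/J_e]$ is non-negative. No step presents a real obstacle — the entire argument is essentially a one-line limit computation, with only this small bookkeeping point to verify.
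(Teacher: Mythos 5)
Your proposal is correct and takes the same route as the paper, which simply asserts that the corollary follows by letting $\lambda \downarrow 0$ in \cref{cor:FKdiffineq}; your limit computation (the prefactor $(1-e^{-\lambda})/\lambda \to 1$ and the partial sums increasing to $\phih_{\beta,q}[|K|]$) is exactly the justification left implicit there. The remark about the case $\phih_{\beta,q}[|K|]=\infty$ is harmless but not even needed, since the limit of the right-hand side is then $-1/2$, which is precisely what the stated inequality asserts in that case.
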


% \begin{proof}
% This follows from \cref{cor:FKdiffineq} by taking the limit as $\lambda \downarrow 0$. 
% \end{proof}

Since $\phih_{\beta,q}(|K|\geq n)$ is increasing in $\beta$, the following inequalities may be obtained by integrating the differential inequalities of \cref{cor:FKdiffineq,cor:FKdiffineq2}:   
Letting $C(\beta)=\max_{e\in E} \left[\frac{e^{\beta J_e}-1}{J_e}\right]$ for each $\beta \geq 0$, we have that
\begin{equation}
\label{eq:integrated1}
\phih_{\beta,q}(|K|\geq n) \leq \phih_{\beta_0,q}(|K|\geq n) \exp\left[ - \frac{(1-e^{-1})(\beta_0-\beta)n}{2 C(\beta_0) \sum_{m=1}^{n} \phih_{\beta_0,q}(|K|\geq m)} + \frac{\beta_0-\beta}{2C(\beta_0)}\right]
\end{equation}
and
\begin{equation}
\label{eq:integrated2}
\phih_{\beta,q}(|K|\geq n) \leq \phih_{\beta_0,q}(|K|\geq n) \exp\left[ - \frac{(\beta_0-\beta)n}{2 C(\beta_0) \phih_{\beta_0,q}\bigl[|K|\bigr]} + \frac{\beta_0-\beta}{2C(\beta_0)}\right]
\end{equation}
for every $n\geq 1$, $0 \leq \beta \leq \beta_0$, and $q\geq 1$.
\section{Analysis of the differential inequality}

\subsection{Critical exponent inequalities}

In this section we apply \cref{cor:FKdiffineq} to prove \cref{thm:gammadelta_percolation,thm:gammagap_percolation,thm:gammadelta_FK,thm:gammagap_FK}.

\begin{proof}[Proof of \cref{thm:gammadelta_percolation,thm:gammadelta_FK}]
Fix $\beta_0 > 0$, and suppose that there exist constants $C>0$ and $\delta>1$ such that $\phih_{\beta_0,q}(|K|\geq n) \leq Cn^{-1/\delta}$ for every $n\geq 1$.  In this proof, we use $\preceq$ and $\succeq$ to denote inequalities that hold up to posiitve multiplicative constants depending only on $(G,J)$, $\delta$, $C$, and $\beta_0$.  We have that
\[
\sum_{m=1}^{n}\phih_{\beta_0,q}(|K|\geq m) \preceq n^{1-1/\delta}
\]
for every $n\geq 1$ and hence by \eqref{eq:integrated1} that
\begin{equation*} \phih_{\beta_1,q}\left(|K|\geq n\right)  \preceq n^{-1/\delta}  \exp\left[ -c_1 (\beta_0-\beta_1) n^{1/\delta}   \right]
 \end{equation*}
for every $0\leq \beta_1 < \beta_0$ and $n\geq 1$. This inequality may be summed over $n$ to obtain that
\begin{equation}
\label{eq:momentboundproof}
\phih_{\beta_1,q}\left[|K|\right] \preceq \sum_{n \geq 1} n^{-1/\delta} \exp\left[ -c_1 (\beta_0-\beta_1) n^{1/\delta}   \right] \preceq (\beta_0-\beta_1)^{-\delta+1}
 \end{equation}
 for every $0 \leq \beta_1 < \beta_0$ and $n\geq 1$. Thus, we have that
 \begin{equation*}
\phih_{\beta,q}(|K|\geq n)  
% \phih_{\beta_1,q}(|K|\geq n) \exp\left[ - \frac{(1-e^{-1})(\beta_1-\beta)n}{2 C(\beta_0) \phih_{\beta_1,q}\bigl[|K|\bigr]} + (\beta_1-\beta)\right]\\
\preceq n^{-1/\delta}\exp\left[-c_2 \frac{(\beta_1-\beta)n}{(\beta_0-\beta_1)^{-\delta+1}}\right]
 \end{equation*}
for every $0 \leq \beta < \beta_1 < \beta_0$ and $n\geq 1$. Item 1 of the theorem follows by taking $\beta_1 = (\beta_0+\beta)/2$. 

% \medskip

Item 2 of the theorem is a simple analytic consequence of item 1 since we have that 
\[\phih_{\beta,q}(|K|\geq x) = \phih_{\beta,q}(|K|\geq \lceil x \rceil) \preceq x^{-1/\delta} \exp\left[-c_2 (\beta_0-\beta)^\delta n\right]\] for every $x>0$, and consequently that, letting $\eps=c_2(\beta_0-\beta)^\delta$ and $\alpha=k-1-1/\delta$, we have that
\begin{multline}
\phih_{\beta,q}\left[|K|^k\right] = k\int_0^\infty x^{k-1} \phih_{\beta,q}(|K|\geq x) \dif x  \preceq k \int_{x= 0}^\infty x^{\alpha} e^{-\eps x} \dif x = k \eps^{-{\alpha}-1}\int_{y= 0}^\infty y^{\alpha} e^{ -y} \dif y \\=  k \Gamma({\alpha}+1) \eps^{-{\alpha}-1} \leq k! \eps^{-{\alpha}-1}  
= k! \left[c_2(\beta_0-\beta)\right]^{-\delta(k-1)-(\delta-1)}
\label{eq:moment_calculation}
\end{multline}
for every $k\geq 1$ and $0\leq \beta < \beta_0$, where we used the change of variables $y=\eps x$ in the final equality on the first line.
\end{proof}

\begin{proof}[Proof of \cref{thm:gammagap_percolation,thm:gammagap_FK}]
This proof is similar to that of \cref{thm:gammadelta_FK}. Fix $\beta_0 > 0$, and suppose that there exist constants $C>0$ and $\delta>1$ such that $\phih_{\beta,q}(|K|\geq n) \leq C(\beta_0-\beta)^{-\gamma}$ for every $0\leq \beta < \beta_0$.  In this proof, we use $\preceq$ and $\succeq$ to denote inequalities that hold up to posiitve multiplicative constants depending only on $(G,J)$, $\gamma$, $C$, and $\beta_0$. By \eqref{eq:integrated2} and Markov's inequality there exist positive constants $c_1$ and $c_2$ such that 
\[
\phih_{\beta,q}(|K|\geq n) \preceq \frac{\phih_{\beta_1,q}[|K|]}{n}\exp\left[-c_1 \frac{n(\beta_1-\beta_0)}{\phih_{\beta_1,q}[|K|]}n\right] \preceq \frac{1}{(\beta_0-\beta)^{\gamma}n}\exp\left[-c_2(\beta_1-\beta_0)^{\gamma+1}n\right]
\]
for every $0 \leq \beta < \beta_1 < \beta_0$ and $n\geq 1$. The proof may be concluded by  taking $\beta_1=(\beta_0+\beta)/2$ and performing essentially the same calculation as in \eqref{eq:moment_calculation}.
\end{proof}

\subsection{Sharpness of the phase transition}

We next apply \cref{cor:FKdiffineq} to prove \cref{thm:sharpness}. The proof is very similar to that given in \cite{duminil2017sharp}. (Note that the analysis presented there substantially simplified the original analysis of Menshikov \cite{MR852458}.) We include it for completeness since our differential inequality is slightly different, and so that we can optimize the constants appearing in item 2 of \cref{thm:sharpness}.

\begin{proof}[Proof of \cref{thm:sharpness}]
 We  begin by defining 
\begin{align*}
\tilde \beta_c^\hash &= \sup\Bigl\{ \beta \geq 0 : \text{ there exists $c,C>0$ such that } \phih_{\beta,q}(|K| \geq n) \leq C n^{-c} \text{ for every $n\geq 1$}\Bigr\}\\
&= \inf\biggl\{\beta \geq 0 : \limsup_{n\to\infty} \frac{\log \phih_{\beta,q}(|K|\geq n)}{\log  n} \geq 0 \biggr\}.
\end{align*}
We trivially have that $\tilde \beta_c^\hash \leq \beta_c^\hash$. Moreover, it is an immediate consequence of \cref{thm:gammadelta_FK} that for every $0\leq \beta< \tilde\beta_c^\hash$ there exists $C_\beta,c_\beta>0$ such that
\begin{equation}
\label{eq:sharp1}
\phih_{\beta,q}(|K| \geq n) \leq C_\beta e^{-c_\beta n} \qquad\text{for every $n\geq 1$}.
\end{equation}
% To complete the proof, it suffices to prove that 
% \begin{equation}
% \label{eq:sharp2}
% \phih_{\beta,q}(|K|=\infty) \geq \frac{(1-e^{-1})(\beta-\tilde \beta_c^\hash)}{2C(\beta)+\beta-\tilde \beta_c^\hash}.
% \end{equation}
% for every $\beta > \tilde \beta_c^\hash$. Indeed, this will imply that $\beta_c^\hash \leq \tilde \beta_c^\hash$, so that we will deduce that $\beta_c^\hash = \tilde \beta_c^\hash$ and hence that the claims of the theorem follow from \eqref{eq:sharp1} and \eqref{eq:sharp2}. 

We next claim that $\phih_{\beta,q}(|K|=\infty)>0$ for every $\beta>\tilde \beta_c^\hash$.
To this end, write $P_n(\beta)=\phih_{\beta,q}(|K| \geq n)$ and $\Sigma_n(\beta)=\sum_{m=0}^{n-1}P_m(\beta)$ and let 
\[T_k(\beta)=\frac{1}{\log k}\sum_{n=1}^k \frac{1}{n}\phih_{\beta,q}(|K| \geq n)\]
for each $\beta\geq 0$ and $k\geq 2$, so that $\lim_{k\to\infty} T_k(\beta) = \phih_{\beta,q}(|K| = \infty)$ for each $\beta \geq 0$. Applying \cref{cor:FKdiffineq} with $\lambda=1$, we obtain that
\[
\lrDini{\beta} T_k(\beta) \geq \frac{1}{2C(\beta) \log k} \sum_{n=1}^k \left[\frac{(1-e^{-1})P_{n}(\beta)}{\Sigma_n(\beta)} - \frac{P_n(\beta)}{n} \right]
% \geq \frac{1}{2C(\beta) \log k} \frac{\sum_{m=r(n-1)+1}^{rn} P_{n}}{\sum_{m=1}^{rn-1} P_m} 
\] 
for every $\beta \geq 0$ and $k\geq 2$.
Using the inequality
$\frac{P_n}{\Sigma_n} \geq \int_{\Sigma_n}^{\Sigma_{n+1}}\frac{1}{x}\dif x = \log \Sigma_{n+1} - \log \Sigma_n
$, 
we deduce that
\[
\lrDini{\beta} T_k(\beta) \geq \frac{(1-e^{-1})\log \Sigma_{k+1}(\beta)}{2C(\beta) \log k}  - \frac{T_k(\beta)}{2C(\beta)} 
% \geq \frac{1}{2C(\beta) \log k} \frac{\sum_{m=r(n-1)+1}^{rn} P_{n}}{\sum_{m=1}^{rn-1} P_m} 
\] 
for every $\beta \geq 0$ and $k\geq 1$. 
 Fixing $\tilde \beta_c^\hash < \beta_1 < \beta_2$, we deduce that
  % Then we have that 
\[
\lrDini{\beta} T_k(\beta) \geq \frac{(1-e^{-1})\log \Sigma_{k+1}(\beta_1)}{2C(\beta_2) \log k}  - \frac{T_k(\beta_2)}{2C(\beta_2)} 
% \geq \frac{1}{2C(\beta) \log k} \frac{\sum_{m=r(n-1)+1}^{rn} P_{n}}{\sum_{m=1}^{rn-1} P_m} 
\] 
for every $\beta_1 \leq \beta \leq \beta_2$ and hence by definition of $\tilde \beta_c^\hash$ that
\[
\limsup_{k\to\infty} \inf_{\beta_1 \leq \beta \leq \beta_2} \lrDini{\beta} T_k(\beta) \geq \frac{(1-e^{-1})}{2C(\beta_2)}  - \frac{\phih_{\beta_2,q}(|K|=\infty)}{2C(\beta_2)} .
% \geq \frac{1}{2C(\beta) \log k} \frac{\sum_{m=r(n-1)+1}^{rn} P_{n}}{\sum_{m=1}^{rn-1} P_m} 
\] 
Integrating this inequality yields that
\[
\phih_{\beta_2,q}(|K|=\infty) \geq \limsup_{k\to\infty} \int_{\beta_1}^{\beta_2} \lrDini{\beta} T_k(\beta) \dif \beta \geq \frac{\beta_2-\beta_1}{2C(\beta_2)}\left[1-e^{-1}-\phih_{\beta_2,q}(|K|=\infty)\right],
\]
which rearranges to give that 
\[
\phih_{\beta_2,q}(|K|=\infty) \geq  \frac{(1-e^{-1})(\beta_2-\beta_1)}{2C(\beta_2)+\beta_2-\beta_1}>0.
\]
The claim now follows since $\tilde  \beta_c^\hash<\beta_1 <\beta_2$ were arbitrary. We deduce that $\tilde \beta_c^\hash \geq \beta_c^\hash$ and hence that  $\tilde \beta_c^\hash = \beta_c^\hash$, so that in particular item 1 of the theorem follows from \eqref{eq:sharp1}.

Finally, we apply \cref{cor:FKdiffineq} again to obtain that
\[
\lrDini{\beta} T_k(\beta) \geq \frac{1}{2C(\beta) \log k} \sum_{n=1}^k \left[\frac{(1-e^{-\lambda})P_{n}(\beta)}{\lambda \Sigma_{\lfloor n/\lambda \rfloor}(\beta)} - \frac{P_n(\beta)}{n} \right]
% \geq \frac{1}{2C(\beta) \log k} \frac{\sum_{m=r(n-1)+1}^{rn} P_{n}}{\sum_{m=1}^{rn-1} P_m}  
\]
for  each $k\geq 2$  and  $\beta \geq 0$. Arguing similarly before, we obtain that
\[
\limsup_{k\to\infty} \inf_{\beta_1 \leq \beta \leq \beta_2}  \lrDini{\beta} T_k(\beta) \geq \frac{1-e^{-\lambda }}{2C(\beta_2)} - \frac{\phih_{\beta,q}(|K|=\infty)}{2C(\beta_2)}
% \geq \frac{1}{2C(\beta) \log k} \frac{\sum_{m=r(n-1)+1}^{rn} P_{n}}{\sum_{m=1}^{rn-1} P_m}  
\]
for every $\beta_c^\hash < \beta_1 \leq \beta_2$,  and item 2 of the theorem follows by sending $\lambda \to\infty$ and then integrating the resulting inequality.  
\end{proof}

% \section{Closing remarks}

% \begin{remark}
% All of our results may easily be adapted \emph{mutatis mutandis} to the more general quasi-transitive setting.
% \end{remark}

\subsection*{Acknowledgments} 
We thank Hugo Duminil-Copin and Geoffrey Grimmett for helpful discussions.
 % on the random cluster model.

  \setstretch{1}
  \bibliographystyle{abbrv}
% {\footnotesize
{\small
  \bibliography{unimodularthesis.bib}
}
\end{document}